\newtheorem{thm}{Theorem}
\newtheorem{prop}[thm]{Proposition}
\newtheorem{exam}[thm]{Example}
\newtheorem{rem}[thm]{Remark}
\newtheorem{lem}[thm]{Lemma}
\newtheorem{defn}[thm]{Definition}
\newtheorem{assum}[thm]{Assumption}
\newcommand{\supp}{\mathop{\mathrm{supp}}}
\renewcommand\labelenumi{\textup{\alph{enumi})}}
\renewcommand\theenumi\labelenumi
\def\@makefnmark{\hbox{(\@textsuperscript{\normalfont\@thefnmark})}}
\providecommand{\ack}[1]{\par\addvspace\baselineskip
\noindent\ackname\enspace\ignorespaces#1}%
\def\subjclassname{\textup{2020} \textit{Mathematics Subject Classification:}}%
\providecommand{\subjclass}[1]{\par\addvspace\baselineskip
\noindent\subjclassname\enspace\ignorespaces#1}%
\begin{document}
\mainmatter
\title{\bfseries Maximal displacement of branching symmetric stable processes}
\titlerunning{Maximal displacement of branching symmetric stable processes}

\author{Yuichi Shiozawa}
\authorrunning{Yuichi Shiozawa}

\institute{Department of Mathematics, Graduate School of Science, Osaka University\\
\email{shiozawa@math.sci.osaka-u.ac.jp}  }

\maketitle

\begin{abstract}  
We determine the limiting distribution and the explicit tail behavior for 
the maximal displacement of a branching symmetric stable process 
with spatially inhomogeneous branching structure. 
Here the branching rate is a Kato class measure with compact support and can be singular 
with respect to the Lebesgue measure.

\keywords{Branching symmetric stable process, symmetric stable process, Kato class measure, additive functional}
\subjclass{60J80, 60J75, 60F05, 60J55, 60J46}
\end{abstract}

\section{Introduction}
We studied in \cite{NS21+} the limiting distributions 
for the maximal displacement of a branching Brownian motion 
with spatially inhomogeneous branching structure. 
In this paper, we show the corresponding results 
for a branching symmetric stable process. 
Our results clarify how the tail behavior of the underlying process 
would affect the long time asymptotic properties of 
the maximal displacement.

There has been great interest in the maximal displacement of branching Brownian motions 
and branching random walks with light tails 
for which the associated branching structures are spatially homogeneous.   
We refer to \cite{B78} as a pioneering work, 
and to \cite[Sections 5--7]{B17} and references therein, and  \cite{KLZ21+,SBM21+} for recent developments 
on this research subject. 
On the other hand, Durrett \cite{D83} proved the weak convergence of 
a properly normalized maximal displacement   
of a branching random walk on ${\mathbb R}$ with regularly varying tails. 
This result in particular shows that 
the maximal displacement grows exponentially in contrast with the light tailed model. 
Bhattacharya-Hazra-Roy \cite[Theorem 2.1]{BHR17} further showed 
the weak convergence of point processes associated with the scaled particle positions. 
We refer to \cite{LS16,GMV17} for related studies 
on the maximal displacement of branching stable processes or branching random walks 
with regularly varying tails.  

Here we are interested in how the spatial inhomogeneity of the branching structure 
would affect the behavior of the maximal displacement. 
For a branching Brownian motion on ${\mathbb R}$,  
Erickson \cite{E84} determined the linear growth rate of the maximal displacement 
in terms of the principal eigenvalue of the Schr\"odinger type operator. 
Lalley-Sellke \cite{LS88} then showed that 
a properly shifted maximal displacement 
is weakly convergent to a random shift of the Gumbel distribution with respect to the limiting martingale.
Bocharov and Harris \cite{BH14,BH16} proved the results corresponding to \cite{E84,LS88}  
for a catalytic branching Brownian motion on ${\mathbb R}$ 
in which reproduction occurs only at the origin. 
We showed in \cite{S18,S19} that the results in \cite{E84,BH14} are valid 
for the branching Brownian motion motion on ${\mathbb R}^d \ (d\geq 1)$
in which the branching rate is a Kato class measure with compact support. 
Under the same setting, Nishimori and the author \cite{NS21+} further 
determined the limiting distribution of the shifted maximal displacement as in \cite{LS88,BH16}. 
This result reveals that, even though reproduction occurs only on a compact set unlike \cite{LS88}, 
the spatial dimension $d$ appears in the lower order term of the maximal displacement. 
We refer to \cite{Bo20,N21+} for further developments.

Carmona-Hu \cite{CH14} and Bulinskaya \cite{B18,B20} 
obtained the linear growth rate and weak convergence 
for the maximal displacement of a branching random walk on ${\mathbb Z}^d$ 
in which reproduction occurs only on finite points 
and the underlying random walk has light tails. 
We note that the underlying random walk in \cite{B18,B20,CH14} is irreducible and allowed to be nonsymmetric, 
and the so-called $L\log L$ condition is sufficient 
for the validity of these results as proved by \cite{B18,B20}. 
Recently, Bulinskaya \cite{B21} showed 
the weak convergence of a properly normalized maximal displacement 
of a branching random walk on ${\mathbb Z}$ with 
 regularly varying tails as in \cite{D83,BHR17} 
and reproduction occurs only on finite points. 
As for the spatially homogeneous model, 
the growth rate of the maximal displacement is exponential in contrast with the light tailed model.

In this paper, we prove the weak convergence and the long time tail behavior 
for the maximal displacement of a branching symmetric stable process on ${\mathbb R}^d$
with spatially inhomogeneous branching structure (Theorems \ref{thm:weak} and \ref{thm:tail}). 
We will then see that the maximal displacement grows exponentially 
and the growth rate is determined 
by the principal eigenvalue of the Schr\"odinger type operator 
associated with the fractional Laplacian. 
The spatial dimension $d$ also affects the limiting distribution and tail behavior of the maximal displacement. 
Our results are applicable to a branching symmetric stable process 
in which reproduction occurs only on singular sets.

Our results can be regarded as a continuous state space and multidimensional analogue of \cite{B21}.
In particular, we provide an explicit form of the limiting distribution for the maximal displacement. 
On the other hand, since our approach is based on the second moment method as for \cite{NS21+},  
we need the second moment condition on the offspring distribution,  
which is stronger than the $L\log L$ condition as imposed in \cite{BHR17,B21,D83}.

We note that the functional analytic approach works well for the continuous space model. 
Our model of branching symmetric stable processes is closely related to  
the Schr\"odinger type operator 
associated with the fractional Laplacian and Kato class measure 
through the first moment formula on the expected population 
(Lemma \ref{lem:moment}). 
It is  possible to calculate or estimate 
the principal eigenvalue of the  Schr\"odinger type operator 
as in Subsection \ref{subsect:exam}.

The rest of this paper is organized as follows: 
In Section \ref{sect:stable}, 
we first prove the resolvent asymptotic behaviors of a symmetric stable process. 
We then discuss the invariance of the essential spectra,  
and the asymptotics of the integral associated with the ground state,  
for the Schr\"odinger type operator. 
We finally determine asymptotic behaviors of the Feynman-Kac functional. 
In Section \ref{sect:branching}, we first introduce a model of branching symmetric stable processes. 
We then present our results in this paper with examples.  
In Section \ref{sect:proof}, we prove the weak convergence result 
(Theorem \ref{thm:weak}) by following the approach of \cite[Theorem 2.4]{NS21+}.

\section{Symmetric stable processes}\label{sect:stable}
For $\alpha\in (0,2)$, 
let ${\mathbf M}=(\{X_t\}_{t\geq 0}, \{P_x\}_{x\in {\mathbb R}^d})$ 
be a symmetric $\alpha$-stable process on ${\mathbb R}^d$ 
generated by $-(-\Delta)^{\alpha/2}/2.$
Let $p_t(x,y)$ be the transition density function of ${\mathbf M}$, 
$$P_x(X_t\in A)=\int_A p_t(x,y)\,{\rm d}y, \quad t>0, x\in {\mathbb R}^d, A\in {\cal B}({\mathbb R}^d).$$
Here ${\cal B}({\mathbb R}^d)$ is the family of Borel measurable sets on ${\mathbb R}^d$.  
By \cite[Theorem 2.1]{BG60} and \cite[Lemmas 2.1 and 2.2]{W17}, we have
\begin{lem}
There exists a positive continuous function $g$ on $[0,\infty)$ such that 
\begin{equation}\label{eq:scaling}
p_t(x,y)=\frac{1}{t^{d/\alpha}}g\left(\frac{|x-y|}{t^{1/\alpha}}\right).
\end{equation}
Moreover, the function $g$ satisfies the following.
\begin{enumerate}
\item[{\rm (i)}] The value $g(0)$ is given by 
$$g(0)=\frac{2^{d/\alpha}\Gamma(d/\alpha)}{\alpha2^{d-1}\pi^{d/2}\Gamma(d/2)}.$$
\item[{\rm (ii)}] 
There exists $c>0$ such that for any $r\geq 0$,
\begin{equation}\label{eq:w-asymp}
0\leq g(0)-g(r)\leq cr^2. 
\end{equation}
\item[{\rm (iii)}] 
The next equality holds.
\begin{equation}\label{eq:bg-asymp}
\lim_{r\rightarrow\infty}r^{d+\alpha}g(r)
=\frac{\alpha\sin(\alpha\pi/2)\Gamma((d+\alpha)/2)\Gamma(\alpha/2)}{2^{2-\alpha}\pi^{1+d/2}}(=:C_{d,\alpha}).
\end{equation}
\end{enumerate}
\end{lem}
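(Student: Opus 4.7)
The plan is to derive all three assertions from the Fourier inversion formula. Since ${\mathbf M}$ is generated by $-(-\Delta)^{\alpha/2}/2$, the characteristic function of $X_t$ under $P_0$ is $\eup^{-t|\xi|^\alpha/2}$, so Fourier inversion gives
\[
p_t(x,y) = \frac{1}{(2\pi)^d}\int_{\mathbb{R}^d} \eup^{-\iup\xi\cdot(x-y)} \eup^{-t|\xi|^\alpha/2}\,d\xi.
\]
The scaling relation (\ref{eq:scaling}) drops out of the substitution $\eta = t^{1/\alpha}\xi$: the function
\[
g(r) = \frac{1}{(2\pi)^d}\int_{\mathbb{R}^d} \cos(\eta_1 r)\,\eup^{-|\eta|^\alpha/2}\,d\eta
\]
is well-defined and radial because $\eup^{-|\eta|^\alpha/2}$ is rotation invariant. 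Continuity of $g$ follows by dominated convergence, and strict positivity comes from strict positivity of the stable density.

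For (i), the plan is to pass to spherical coordinates: the angular integration contributes the surface area $2\pi^{d/2}/\Gamma(d/2)$ of $S^{d-1}$, and the substitution $u = \rho^\alpha/2$ in the radial integral produces $(2^{d/\alpha}/\alpha)\Gamma(d/\alpha)$, which combine to give the stated constant. For (ii), I write
\[
g(0)-g(r) = \frac{1}{(2\pi)^d}\int_{\mathbb{R}^d} \bigl(1-\cos(\eta_1 r)\bigr) \eup^{-|\eta|^\alpha/2}\,d\eta;
\]
nonnegativity is immediate, and the elementary bound $1-\cos x\leq x^2/2$ yields $g(0)-g(r)\leq cr^2$ with $c = \frac{1}{2(2\pi)^d}\int_{\mathbb{R}^d}\eta_1^2 \eup^{-|\eta|^\alpha/2}\,d\eta<\infty$, since $\eup^{-|\eta|^\alpha/2}$ decays super-polynomially along any radial direction.

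The main obstacle is (iii), the Blumenthal-Getoor tail asymptotic. My plan is to use the subordination representation
\[
g(r) = \int_0^\infty \frac{1}{(4\pi s)^{d/2}} \eup^{-r^2/(4s)}\,\mu(ds),
\]
where $\mu$ is the law at time $1$ of the one-sided $\alpha/2$-stable subordinator subordinating Brownian motion; its Karamata tail $\mu([s,\infty)) \sim c_\alpha s^{-\alpha/2}$ as $s\to\infty$, fed into a Laplace-type expansion of the $s$-integral, isolates the large-$r$ behaviour as $g(r)\sim C\,r^{-(d+\alpha)}$, and the constant $C$ can be identified with $C_{d,\alpha}$ via the reflection identity $\Gamma(\alpha/2)\Gamma(1-\alpha/2) = \pi/\sin(\alpha\pi/2)$. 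An equivalent route goes through the small-time limit $p_t(0,z)/t \to C_{d,\alpha}|z|^{-d-\alpha}$ to the Lévy density of ${\mathbf M}$, which combined with the scaling at $t=|z|^{-\alpha}$ yields the asymptotic directly. The delicate part is tracking the multiplicative constant cleanly through either method, and since the lemma is effectively a combined citation of \cite{BG60} and \cite{W17}, the practical plan is to appeal to those sources rather than reprove the exact value of $C_{d,\alpha}$ from scratch.
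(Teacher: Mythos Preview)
The paper gives no proof of this lemma at all: it is stated as an immediate consequence of \cite[Theorem 2.1]{BG60} and \cite[Lemmas 2.1 and 2.2]{W17}, with no further argument. Your proposal therefore does strictly more than the paper, and what you do is correct. The Fourier-inversion derivation of the scaling form and of $g$, the spherical-coordinate computation for (i), and the $1-\cos x\le x^2/2$ bound for (ii) are all clean and valid; your closing remark that for (iii) one should ultimately cite \cite{BG60} and \cite{W17} for the exact constant is precisely what the paper does. One tiny quibble: saying ``strict positivity comes from strict positivity of the stable density'' is circular as phrased, since $g$ \emph{is} the time-$1$ density; but positivity follows at once from the subordination formula you write down (an average of strictly positive Gaussians), so the gap is only in the wording.
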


\subsection{Resolvent asymptotics}

In this subsection, we prove the asymptotic behaviors 
and the so-called $3$G-type inequality for the resolvent density of ${\mathbf M}$. 
For $\beta>0$, let $G_{\beta}(x,y)$ denote the $\beta$-resolvent density 
of ${\mathbf M}$,
$$G_{\beta}(x,y)=\int_0^{\infty}e^{-\beta t}p_t(x,y)\,{\rm d}t.$$ 
Define 
$$w_{\beta}(r)
=\int_0^{\infty}e^{-\beta t}\frac{1}{t^{d/\alpha}}g\left(\frac{r}{t^{1/\alpha}}\right)\,{\rm d}t \ (r\geq 0)$$
so that $G_{\beta}(x,y)=w_{\beta}(|x-y|)$ by \eqref{eq:scaling}.

\begin{lem}\label{lem:resol} Let $\beta>0$.
\begin{enumerate}
\item[{\rm (i)}] The function $w_{\beta}$ satisfies 
\begin{equation}\label{eq:res-inf}
\lim_{r\rightarrow\infty}r^{d+\alpha}w_{\beta}(r)
=\beta^{-2}C_{d,\alpha}
\end{equation}
and
\begin{equation}\label{eq:res-0}
w_{\beta}(r)\sim 
\begin{cases}
\beta^{(d-\alpha)/\alpha}\Gamma((\alpha-d)/d)g(0) & (d<\alpha), \\ 
\alpha g(0) \log r^{-1} & (d=\alpha),\\
\alpha r^{\alpha-d}\int_0^{\infty}s^{d-\alpha-1}g(s)\,{\rm d}s & (d>\alpha),
\end{cases}
\quad (r\rightarrow +0).
\end{equation}
\item[{\rm (ii)}] 
There exists $C>0$ such that for any $x,y,z\in {\mathbb R}^d$,
$$G_{\beta}(x,y)G_{\beta}(y,z)\leq CG_{\beta}(x,z)\left(G_{\beta}(x,y)+G_{\beta}(y,z)\right).$$
\end{enumerate}
\end{lem}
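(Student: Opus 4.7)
The plan for part (i) is to reduce all three asymptotic regimes to an application of dominated convergence to the integral
\[
w_\beta(r) = \int_0^\infty e^{-\beta t} t^{-d/\alpha} g(r/t^{1/\alpha})\,\mathrm{d}t,
\]
using the control on $g$ provided by \eqref{eq:w-asymp} at the origin and \eqref{eq:bg-asymp} at infinity. A preliminary observation is that $s \mapsto s^{d+\alpha} g(s)$ is continuous on $[0,\infty)$ with a finite limit at infinity, hence bounded by some constant $M$.

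For \eqref{eq:res-inf} I would rewrite
\[
r^{d+\alpha} w_\beta(r) = \int_0^\infty e^{-\beta t}\, t \cdot (r/t^{1/\alpha})^{d+\alpha} g(r/t^{1/\alpha})\,\mathrm{d}t;
\]
for each fixed $t>0$ the second factor converges to $C_{d,\alpha}$ as $r \to \infty$, it is uniformly bounded by $M$, and dominated convergence against $Me^{-\beta t}t$ yields $\beta^{-2}C_{d,\alpha}$. For \eqref{eq:res-0} I split by cases. When $d<\alpha$, direct dominated convergence with the integrable dominant $g(0)e^{-\beta t}t^{-d/\alpha}$ identifies the limit as $g(0)\int_0^\infty e^{-\beta t}t^{-d/\alpha}\,\mathrm{d}t$, which evaluates after the substitution $u=\beta t$ to the stated Gamma expression. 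When $d>\alpha$, the substitution $s=r/t^{1/\alpha}$ gives
\[
r^{d-\alpha} w_\beta(r) = \alpha \int_0^\infty e^{-\beta (r/s)^\alpha}\, s^{d-\alpha-1} g(s)\,\mathrm{d}s,
\]
and $s^{d-\alpha-1}g(s)$ is integrable near zero (since $d>\alpha$ and $g$ is bounded) and near infinity (by \eqref{eq:bg-asymp}), so dominated convergence produces the limit.

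The case $d=\alpha$ (necessarily $d=\alpha=1$) will be the main obstacle, since one must isolate a logarithmic divergence. I would substitute $s=r/t$ to rewrite $w_\beta(r)=\int_0^\infty e^{-\beta r/s} g(s) s^{-1}\,\mathrm{d}s$, split at $s=1$ (the tail is $O(1)$ by \eqref{eq:bg-asymp}), and on $(0,1)$ decompose $g(s)=g(0)+(g(s)-g(0))$. The error piece is $O(1)$ by \eqref{eq:w-asymp}, while the main piece $g(0)\int_0^1 e^{-\beta r/s} s^{-1}\,\mathrm{d}s$ reduces via $u=\beta r/s$ to $g(0)\int_{\beta r}^\infty e^{-u}/u\,\mathrm{d}u = g(0)\log r^{-1}+O(1)$, which equals $\alpha g(0)\log r^{-1}+O(1)$ since $\alpha=1$.

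For part (ii) I would combine two consequences of the explicit form of $w_\beta$. First, $g$ is known to be radially decreasing (a classical property of isotropic $\alpha$-stable densities), so $w_\beta$ is nonincreasing on $[0,\infty)$. Second, the asymptotics from part (i), together with continuity and strict positivity of $w_\beta$ on compact subsets of $(0,\infty)$, yield a doubling property $w_\beta(r/2) \leq C w_\beta(r)$ for every $r>0$: in each regime $r\to 0$ and $r\to\infty$ the leading behaviour is bounded, logarithmic, or a pure power of $r$, all of which are doubling, and on compact subsets of $(0,\infty)$ the ratio is continuous and positive. The $3$G-inequality then follows by the standard short argument: by symmetry one may assume $|y-z|\geq|x-y|$, whence the triangle inequality gives $|y-z|\geq|x-z|/2$, and monotonicity plus doubling force $G_\beta(y,z) \leq w_\beta(|x-z|/2) \leq C G_\beta(x,z)$, so that $G_\beta(x,y) G_\beta(y,z) \leq C G_\beta(x,y) G_\beta(x,z) \leq C G_\beta(x,z)\bigl(G_\beta(x,y)+G_\beta(y,z)\bigr)$.
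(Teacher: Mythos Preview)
Your argument is correct and, for part (i), matches the paper's proof essentially line for line: the same dominated-convergence reductions for \eqref{eq:res-inf} and for the cases $d<\alpha$, $d>\alpha$, and in the critical case $d=\alpha=1$ the paper also isolates the logarithm by splitting the integral and using \eqref{eq:w-asymp} and \eqref{eq:bg-asymp} to bound the remaining pieces (the paper splits the $t$-integral at $t=r^{\alpha}$ rather than substituting first, but the two computations are equivalent). For part (ii) the paper gives no details beyond ``follows by (i) and direct calculation''; your monotonicity-plus-doubling route is exactly such a calculation, the one external input being the unimodality of the isotropic $\alpha$-stable density, which is classical and could in any case be bypassed by replacing monotonicity of $w_\beta$ with the two-sided comparison $w_\beta(r)\asymp \phi(r)$ for an explicit decreasing doubling $\phi$ that (i) already provides.
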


\begin{proof}
(ii) follows by (i) and direct calculation. 
We now show (i). 
The relation \eqref{eq:res-inf} follows by \eqref{eq:bg-asymp} 
and the dominated convergence theorem:
\begin{equation*}
\begin{split}
r^{d+\alpha}w_{\beta}(r)
&=\int_0^{\infty}e^{-\beta t}\left(\frac{r}{t^{1/\alpha}}\right)^{d+\alpha}
g\left(\frac{r}{t^{1/\alpha}}\right)t\,{\rm d}t
\rightarrow \beta^{-2}C_{d,\alpha}\quad (r\rightarrow\infty).
\end{split}
\end{equation*}

We next show \eqref{eq:res-0}. If $d<\alpha$, then as $r\rightarrow +0$,
$$
w_{\beta}(r)
\rightarrow 
\int_0^{\infty}e^{-\beta t}t^{-d/\alpha}\,{\rm d}t \, g(0)
=\beta^{(d-\alpha)/\alpha}\Gamma\left(\frac{\alpha-d}{\alpha}\right)g(0).
$$
If $d>\alpha$, then \eqref{eq:bg-asymp} implies that as $r\rightarrow +0$, 
\begin{equation*}
r^{d-\alpha}w_{\beta}(r)
=\alpha \int_0^{\infty}e^{-\beta (r/s)^{\alpha}}
s^{d-\alpha-1} g(s)\,{\rm d}s
\rightarrow \alpha \int_0^{\infty}
s^{d-\alpha-1} g(s)\,{\rm d}s. 
\end{equation*}

We now assume that $d=\alpha(=1)$. 
Let 
$$
w_{\beta}(r)
=\int_0^{r^{\alpha}}e^{-\beta t}t^{-1}g\left(\frac{r}{t^{1/\alpha}}\right)\,{\rm d}t
+\int_{r^{\alpha}}^{\infty}e^{-\beta t}t^{-1}g\left(\frac{r}{t^{1/\alpha}}\right)\,{\rm d}t
={\rm (I)}+{\rm (II)}.
$$
Then by \eqref{eq:bg-asymp}, 
$${\rm (I)}\leq \frac{c_1}{r^{2\alpha}}\int_0^{r^{\alpha}}e^{-\beta t}t\,{\rm d}t\leq c_2.$$
Let 
$$
{\rm (II)}
=\int_{r^{\alpha}}^{\infty}e^{-\beta t}t^{-1}
\left(g\left(\frac{r}{t^{1/\alpha}}\right)-g(0)\right)\,{\rm d}t
+g(0)\int_{r^{\alpha}}^{\infty}e^{-\beta t}t^{-1}\,{\rm d}t.
$$
Then by \eqref{eq:w-asymp}, 
$$\left|\int_{r^{\alpha}}^{\infty}e^{-\beta t}t^{-1}
\left(g\left(\frac{r}{t^{1/\alpha}}\right)-g(0)\right)\,{\rm d}t\right|
\leq c_3r^2\int_{r^{\alpha}}^{\infty}e^{-\beta t}t^{-1-2/\alpha}\,{\rm d}t\leq c_4.$$
Since 
$$\int_{r^{\alpha}}^{\infty}e^{-\beta t}t^{-1}\,{\rm d}t
\sim \alpha \log\left(\frac{1}{r}\right) \quad (r\rightarrow +0),$$
we obtain the desired assertion for $d=\alpha$.
\qed
\end{proof}

If $d>\alpha$, then ${\mathbf M}$ is transient 
and the Green function $G(x,y)=\int_0^{\infty}p_t(x,y)\,{\rm d}t$ is given by 
\begin{equation}\label{eq:green}
G(x,y)=\frac{2^{1-\alpha}\Gamma((d-\alpha)/2)}{\pi^{d/2}\Gamma(\alpha/2)}|x-y|^{\alpha-d}.
\end{equation}
We use the notation $G_0(x,y)=G(x,y)$ for $d>\alpha$. 

\subsection{Spectral properties of Schr\"odinger type operators with the fractional Laplacian}
\label{subsect:spectral}
In this subsection, we study spectral properties of a Schr\"odinger type operator 
associated with the fractional Laplacian and Green tight Kato class measure. 
We first introduce Kato class and Green tight measures, and the associated bilinear forms.   
\begin{defn}
\begin{enumerate}
\item[{\rm (i)}] Let $\mu$ be a positive Radon measure on ${\mathbb R}^d$. 
Then $\mu$ belongs to the Kato class {\rm (}$\mu\in {\cal K}$ in notation{\rm )} 
if 
$$\lim_{\beta\rightarrow\infty}
\sup_{x\in {\mathbb R}^d}\int_{{\mathbb R}^d}G_{\beta}(x,y)\,\mu({\rm d}y)=0.$$
\item[{\rm (ii)}] A measure $\mu\in {\cal K}$ is  {\rm (}$1$-{\rm )}Green tight 
{\rm (}$\mu\in {\cal K}_{\infty}(1)$ in notation{\rm )} if 
$$\lim_{R\rightarrow\infty}
\sup_{x\in {\mathbb R}^d}\int_{|y|>R}G_1(x,y)\,\mu({\rm d}y)=0.$$
\end{enumerate}
\end{defn}

Any Kato class measure with compact support  in ${\mathbb R}^d$ 
belongs to ${\cal K}_{\infty}(1)$ by definition.

Let $({\cal E},{\cal F})$ be a regular Dirichlet form on $L^2({\mathbb R}^d)$ 
associated with ${\mathbf M}$, 
\begin{equation*}
\begin{split}
{\cal F}&=\left\{u\in L^2({\mathbb R}^d) \mid \iint_{{\mathbb R}^d\times {\mathbb R}^d}
\frac{(u(x)-u(y))^2}{|x-y|^{d+\alpha}}\,{\rm d}x{\rm d}y<\infty\right\},\\
{\cal E}(u,u)&=\frac{1}{2}{\cal A}(d,\alpha)\iint_{{\mathbb R}^d\times {\mathbb R}^d}
\frac{(u(x)-u(y))^2}{|x-y|^{d+\alpha}}\,{\rm d}x{\rm d}y, \ u\in{\cal F} 
\end{split}
\end{equation*}
with 
$${\cal A}(d,\alpha)=\frac{\alpha2^{d-1}\Gamma((d+\alpha)/2)}{\pi^{d/2}\Gamma(1-\alpha/2)}$$
(\cite[Example 1.4.1]{FOT11}). 
For $\beta>0$, we let ${\cal E}_{\beta}(u,u)={\cal E}(u,u)+\beta\int_{{\mathbb R}^d}u^2\,{\rm d}x$.
Since any function in ${\cal F}$ admits a quasi continuous version (\cite[Theorem 2.1.3]{FOT11}), 
we may and do assume that if we write $u\in {\cal F}$, 
then $u$ denotes its quasi continuous version.  

For $\mu\in {\cal K}$ and $\beta>0$, 
let $G_{\beta}\mu(x)=\int_{{\mathbb R}^d}G_{\beta}(x,y)\,\mu({\rm d}y)$. 
Then $\|G_{\beta}\mu\|_{\infty}<\infty$ by the definition of the Kato class measure. 
Moreover, the Stollmann-Voigt inequality (\cite[Theorem 3.1]{SV96} and \cite[Exercise 6.4.4]{FOT11}) 
holds: 
\begin{equation}\label{eq:sv}
\int_{{\mathbb R}^d}u^2\,{\rm d}\mu\leq \|G_{\beta}\mu\|_{\infty}{\cal E}_{\beta}(u,u), \ u\in {\cal F}.
\end{equation}
In particular, any function $u\in {\cal F}$ belongs to $L^2(\mu)$.
We also know by \eqref{eq:sv} that the embedding 
$$I_{\mu}: ({\cal F},\sqrt{{\cal E}_{\beta}}) \to L^2(\mu), \quad I_{\mu}f=f, \ \text{$\mu$-a.e.}$$
is continuous. 
Even if $\mu$ is a signed measure, we can define the continuous embedding $I_{\mu}$ as above
by replacing $\mu$ with $|\mu|$.

Let $\nu$ be a signed Borel measure on ${\mathbb R}^d$ such that
the measures $\nu^+$ and $\nu^{-}$ in the Jordan decomposition $\nu=\nu^+-\nu^{-}$ belong to ${\cal K}$.  
Let $({\cal E}^{\nu},{\cal F})$ be the quadratic form on $L^2({\mathbb R}^d)$ defined by 
$${\cal E}^{\nu}(u,u)={\cal E}(u,u)-\int_{{\mathbb R}^d}u^2\,{\rm d}\nu, \ u\in {\cal F}.$$
Since $\nu$ charges no set of zero capacity (\cite[Theorem 3.3]{ABM91}), 
$({\cal E}^{\nu},{\cal F})$ is well-defined. 
Furthermore, it is 
a lower bounded closed symmetric bilinear form on $L^2({\mathbb R}^d)$ (\cite[Theorem 4.1]{ABM91}) 
so that the associated self-adjoint operator ${\cal H}^{\nu}$ on $L^2({\mathbb R}^d)$ 
is formally written as ${\cal H}^{\nu}=(-\Delta)^{\alpha/2}/2-\nu$.

Let $\{p_t^{\nu}\}_{t>0}$ be the strongly continuous symmetric semigroup on $L^2({\mathbb R}^d)$ 
generated by $({\cal E}^{\nu},{\cal F})$. 
Let $A_t^{\nu^{+}}$ and $A_t^{\nu^{-}}$ be the positive continuous additive functionals 
in the Revuz correspondence to $\nu^+$ and $\nu^{-}$, respectively 
(see \cite[p.401]{FOT11} for details).  
If we define $A_t^{\nu}=A_t^{\nu^{+}}-A_t^{\nu^{-}}$, then 
$$p_t^{\nu}f(x)=E_x\left[e^{A_t^{\nu}}f(X_t)\right], 
\quad f\in L^2({\mathbb R}^d)\cap {\cal B}_b({\mathbb R}^d).$$ 
Here ${\cal B}_b({\mathbb R}^d)$ is 
the family of bounded Borel measurable functions on ${\mathbb R}^d$. 
Moreover, there exists a jointly continuous integral kernel $p_t^{\nu}(x,y)$ 
on $(0,\infty)\times {\mathbb R}^d\times{\mathbb R}^d$ such that 
$p_t^{\nu}f(x)=\int_{{\mathbb R}^d}p_t^{\nu}(x,y)f(y)\,{\rm d}y$ (\cite[Theorem 7.1]{ABM91}). 

For $\beta\geq 0$, we define 
$G_{\beta}^{\nu}f(x)=\int_0^{\infty}e^{-\beta t}p_t^{\nu}f(x)\,{\rm d}t$ 
provided that the right hand side above makes sense. 
Then 
$$G_{\beta}^{\nu}f(x)=E_x\left[\int_0^{\infty}e^{-\beta t+A_t^{\nu}}f(X_t)\,{\rm d}t\right].$$
If we define $G_{\beta}^{\nu}(x,y)=\int_0^{\infty}e^{-\beta t}p_t^{\nu}(x,y)\,{\rm d}t$, 
then
$G_{\beta}^{\nu}f(x)=\int_{{\mathbb R}^d}G_{\beta}^{\nu}(x,y)f(y)\,{\rm d}y$.

We next discuss the invariance of the essential spectra of $(-\Delta)^{\alpha/2}/2$ 
under the perturbation with respect to the finite Kato class measure. 
For a self-adjoint operator ${\cal L}$ on $L^2({\mathbb R}^d)$, 
let $\sigma_{\rm ess}({\cal L})$ denote its essential spectrum.  
\begin{prop}\label{prop:ess}
If $\nu^{+}$ and $\nu^{-}$ are finite Kato class measures, 
then $\sigma_{\rm ess}({\cal H}^{\nu})=\sigma_{\rm ess}((-\Delta)^{\alpha/2}/2)=[0,\infty)$.
\end{prop}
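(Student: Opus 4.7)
The plan is to apply Weyl's theorem on the invariance of the essential spectrum under a compact perturbation of the resolvent. It suffices to establish two facts: (a) $\sigma_{\rm ess}((-\Delta)^{\alpha/2}/2)=[0,\infty)$, and (b) the resolvent difference $G_\beta^\nu-G_\beta$ is a compact operator on $L^2({\mathbb R}^d)$ for some $\beta>0$ in the common resolvent set. Part (a) is standard Fourier analysis: under the Fourier transform $(-\Delta)^{\alpha/2}/2$ is unitarily equivalent to multiplication by $|\xi|^\alpha/2$, which has purely absolutely continuous spectrum $[0,\infty)$; thus $\sigma_{\rm ess}=\sigma=[0,\infty)$.

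For (b) I would start from the resolvent identity, which formally reads
\[
G_\beta^\nu f-G_\beta f=\int_{{\mathbb R}^d}G_\beta^\nu(\cdot,y)\,(G_\beta f)(y)\,\nu({\rm d}y),
\]
presenting $G_\beta^\nu-G_\beta$ as a factorization through $L^2(|\nu|)$. Writing $J_\beta f=(G_\beta f)|_{|\nu|}$ for the restriction of the quasi-continuous version and $J_\beta^\nu$ analogously for $G_\beta^\nu$, this factorization reads $G_\beta^\nu-G_\beta=(J_\beta^\nu)^*\,M_{{\rm sgn}\,\nu}\,J_\beta$, where $M_{{\rm sgn}\,\nu}$ multiplies by the Radon-Nikodym density of $\nu$ against $|\nu|$. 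Each outer map splits further as $L^2({\mathbb R}^d)\xrightarrow{\sqrt{G_\beta}}({\cal F},\sqrt{{\cal E}_\beta})\xrightarrow{I_{|\nu|}}L^2(|\nu|)$, the first arrow being bounded and the second being continuous by the Stollmann-Voigt inequality \eqref{eq:sv}. Hence compactness of $G_\beta^\nu-G_\beta$ reduces to compactness of the embedding $I_{|\nu|}:({\cal F},\sqrt{{\cal E}_\beta})\to L^2(|\nu|)$.

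This last compactness is the main obstacle, and it is here that the \emph{finite} Kato hypothesis (over mere Kato) is essential. I would argue by truncation: write $|\nu|=|\nu|\mathbf{1}_{B_R}+|\nu|\mathbf{1}_{B_R^c}$. The restriction $|\nu|\mathbf{1}_{B_R}$ is a Kato measure with compact support, hence belongs to ${\cal K}_{\infty}(1)$, and then $I_{|\nu|\mathbf{1}_{B_R}}$ is compact by standard Dirichlet-form theory (Rellich-type compactness of the fractional Sobolev space ${\cal F}=H^{\alpha/2}({\mathbb R}^d)$ into $L^2_{\rm loc}$ combined with \eqref{eq:sv} against the compactly supported measure). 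For the tail, \eqref{eq:sv} yields the operator-norm bound
\[
\|I_{|\nu|}-I_{|\nu|\mathbf{1}_{B_R}}\|^2\leq\|G_\beta(|\nu|\mathbf{1}_{B_R^c})\|_\infty,
\]
and one must verify this right-hand side tends to $0$ as $R\to\infty$. This is the delicate step: for $|x|\leq R/2$ the bound follows immediately from the decay $G_\beta(x,y)=O(|x-y|^{-d-\alpha})$ from \eqref{eq:res-inf} together with $|\nu|({\mathbb R}^d)<\infty$, while for $|x|>R/2$ one splits the tail further into a piece far from $x$ (controlled by the same Green-function decay and total finite mass) and a piece near $x$, controlled by choosing a compact $K$ with $|\nu|(K^c)$ small and noting that for $R$ large relative to $K$ the ball $B(x,R/4)$ misses $K$, so the near piece involves only a small sub-measure of $|\nu|$ whose interaction with the singularity of $G_\beta$ stays bounded by the Kato property. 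A compact-plus-small-norm approximation then delivers compactness of $I_{|\nu|}$, and Weyl's theorem concludes the proof.
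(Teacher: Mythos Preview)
Your overall architecture matches the paper's: Weyl's theorem reduces everything to showing $G_\beta^\nu-G_\beta$ is compact on $L^2({\mathbb R}^d)$ for some large $\beta$. The paper carries this out via Lemmas~\ref{lem:bdd}--\ref{lem:cpt}: it writes $G_\beta^\nu-G_\beta=K_\beta(1-I_\nu K_\beta)^{-1}I_\nu G_\beta$ (a rigorous form of the second resolvent identity) and proves directly that $K_\beta:L^2(|\nu|)\to L^2({\mathbb R}^d)$ is compact, by approximating it with the Hilbert--Schmidt operators obtained from cutting the kernel $G_\beta(x,y)$ to $\{|x-y|\geq 1/n\}$ and controlling the near-diagonal remainder through the small-$r$ asymptotics \eqref{eq:res-0} together with finiteness of $|\nu|$. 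No external compact-embedding result is invoked.

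Your route is different and more abstract: you reduce to compactness of the trace map $I_{|\nu|}:({\cal F},\sqrt{{\cal E}_\beta})\to L^2(|\nu|)$ and argue by spatial truncation. The tail half of that argument is essentially correct (it amounts to showing that a finite Kato measure is Green-tight; your case split for $|x|>R/2$ is really the local-Kato small-ball characterization in disguise). But the compactly supported half has a genuine gap. Your parenthetical justification ``Rellich-type compactness of $H^{\alpha/2}$ into $L^2_{\rm loc}$ combined with \eqref{eq:sv}'' does not establish compactness of $I_{|\nu|{\bf 1}_{B_R}}$ when $|\nu|$ is singular with respect to Lebesgue measure. Rellich delivers precompactness in $L^2(B_R,{\rm d}x)$, and there is no bounded map $L^2(B_R,{\rm d}x)\to L^2(|\nu|)$ when $|\nu|$ is, say, a surface measure or a Dirac mass, so the composition does not close; Stollmann--Voigt only gives boundedness of $I_{|\nu|}$, not compactness. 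The statement you want (compact trace embedding for measures in ${\cal K}_\infty(1)$) is true, but its proof is not Rellich plus \eqref{eq:sv}; it requires precisely the kind of kernel work the paper does in Lemma~\ref{lem:cpt}(i), or an equivalent potential-theoretic argument. Either supply a genuine proof of that compact embedding, cite a precise reference, or adopt the paper's direct Hilbert--Schmidt approximation of $K_\beta$.
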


We refer to \cite{Be04} and \cite{BEKS94} for $\alpha=2$. 
To prove Proposition \ref{prop:ess}, we follow the argument of \cite[Theorem 3.1]{BEKS94}. 
More precisely, we show three lemmas below for the proof of Proposition \ref{prop:ess}. 

Suppose that $\nu$ is a signed Borel measure on ${\mathbb R}^d$ 
such that $\nu^+$ and $\nu^{-}$ are positive Radon measures on ${\mathbb R}^d$ 
charging no set of zero capacity. 
For $\beta>0$, let $G_{\beta}\nu(x)=\int_{{\mathbb R}^d}G_{\beta}(x,y)\,\nu({\rm d}y)$.
Then for any $f\in L^2(|\nu|)$, 
$(f\nu)^+$ and $(f\nu)^{-}$ are positive Radon measures on ${\mathbb R}^d$ 
charging no set of zero capacity and
$$G_{\beta}(f\nu)(x)
=E_x\left[\int_0^{\infty}e^{-\beta t}f(X_t)\,{\rm d}A_t^{\nu}\right]
=E_x^{\beta}\left[A_{\zeta}^{f\nu}\right].$$
Here $P_x^{\beta}$ is the law of the $e^{-\beta t}$-subprocess of ${\mathbf M}$ 
and $\zeta$ is the lifetime of this subprocess. 
By \cite[Theorem 6.7.4]{CF12}, we have $G_{\beta}(f\nu)\in {\cal F}$ and
${\cal E}_{\beta}(G_{\beta}(f\nu),v)
=\int_{{\mathbb R}^d}f\cdot I_{\nu}v\,{\rm d}\nu$ for any $v\in {\cal F}$.
Hence if we define 
$K_{\beta}f(x)=G_{\beta}(f\nu)(x)$ for $f\in L^2(|\nu|)$, 
then $K_{\beta}f\in {\cal F}$.

\begin{lem}\label{lem:bdd}
If $\nu^{+}$ and $\nu^{-}$ belong to the Kato class, 
then for any $\beta>0$, $I_{\nu}K_{\beta}$ is a bounded linear operator on $L^2(|\nu|)$. 
Moreover, there exists $\beta_0>0$ such that for any $\beta>\beta_0$, 
the associated operator norm  $\|I_{\nu}K_{\beta}\|$ satisfies $\|I_{\nu}K_{\beta}\|<1$.  
\end{lem}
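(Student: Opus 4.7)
The plan is to bound the operator norm of $I_{\nu}K_{\beta}$ directly in terms of $\|G_{\beta}|\nu|\|_{\infty}$, exploiting the identification of $K_{\beta}f$ as the resolvent representative in $\mathcal{F}$ together with the Stollmann-Voigt inequality \eqref{eq:sv} applied to the total variation measure $|\nu|=\nu^{+}+\nu^{-}$. The key point is that $|\nu|\in\mathcal{K}$ whenever $\nu^{+},\nu^{-}\in\mathcal{K}$, and that the Kato condition gives $\|G_{\beta}|\nu|\|_{\infty}\to 0$ as $\beta\to\infty$.

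For $f\in L^{2}(|\nu|)$, the identity quoted from \cite[Theorem 6.7.4]{CF12} yields
\[
\mathcal{E}_{\beta}(K_{\beta}f, K_{\beta}f)=\int_{\mathbb{R}^{d}} f\cdot I_{\nu}(K_{\beta}f)\,{\rm d}\nu.
\]
I would bound the right-hand side by $\int |f\cdot I_{\nu}(K_{\beta}f)|\,{\rm d}|\nu|$ and apply the Cauchy-Schwarz inequality on $L^{2}(|\nu|)$ to get the product $\|f\|_{L^{2}(|\nu|)}\,\|I_{\nu}(K_{\beta}f)\|_{L^{2}(|\nu|)}$. Next, since $K_{\beta}f\in\mathcal{F}$, inequality \eqref{eq:sv} applied to $|\nu|$ gives
\[
\|I_{\nu}(K_{\beta}f)\|_{L^{2}(|\nu|)}^{2}=\int_{\mathbb{R}^{d}}(K_{\beta}f)^{2}\,{\rm d}|\nu|\leq \|G_{\beta}|\nu|\|_{\infty}\,\mathcal{E}_{\beta}(K_{\beta}f,K_{\beta}f).
\]
Substituting back and cancelling one factor of $\mathcal{E}_{\beta}(K_{\beta}f,K_{\beta}f)^{1/2}$ leaves $\mathcal{E}_{\beta}(K_{\beta}f,K_{\beta}f)^{1/2}\leq \|G_{\beta}|\nu|\|_{\infty}^{1/2}\|f\|_{L^{2}(|\nu|)}$. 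Plugging this back into the Stollmann-Voigt bound produces
\[
\|I_{\nu}K_{\beta}f\|_{L^{2}(|\nu|)}\leq \|G_{\beta}|\nu|\|_{\infty}\,\|f\|_{L^{2}(|\nu|)},
\]
which shows boundedness with $\|I_{\nu}K_{\beta}\|\leq \|G_{\beta}|\nu|\|_{\infty}$ for every $\beta>0$.

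For the second assertion, the hypothesis $\nu^{\pm}\in\mathcal{K}$ immediately gives $|\nu|\in\mathcal{K}$, so by the Kato class definition $\|G_{\beta}|\nu|\|_{\infty}\to 0$ as $\beta\to\infty$. Choosing $\beta_{0}$ so large that $\|G_{\beta}|\nu|\|_{\infty}<1$ for all $\beta>\beta_{0}$ yields the claimed strict contraction property. The only slightly delicate point is justifying the integration of $f\cdot I_{\nu}(K_{\beta}f)$ against the signed measure $\nu$, but this is routine once we observe that the Stollmann-Voigt bound places $K_{\beta}f$ in $L^{2}(|\nu|)$, so the integrand lies in $L^{1}(|\nu|)$ by Cauchy-Schwarz; no other obstacle is anticipated.
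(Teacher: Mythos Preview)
Your proof is correct and follows essentially the same route as the paper's: both combine the identity $\mathcal{E}_{\beta}(K_{\beta}f,K_{\beta}f)=\int f\cdot I_{\nu}(K_{\beta}f)\,{\rm d}\nu$ with the Stollmann--Voigt inequality to obtain $\|I_{\nu}K_{\beta}\|\leq \|G_{\beta}|\nu|\|_{\infty}$, and then invoke the Kato condition. The only cosmetic difference is that the paper treats the case $\nu^{-}=0$ and remarks that the general case is similar, whereas you handle the signed case directly by working with $|\nu|$ throughout.
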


\begin{proof} \rm 
We prove this lemma only for $\nu^{-}=0$   
because a similar calculation applies to the general case.  
By \eqref{eq:sv}, 
we have for any $f\in L^2(\nu)$,
$$
\int_{{\mathbb R}^d}(I_{\nu}K_{\beta}f)^2\,{\rm d}\nu
=\int_{{\mathbb R}^d}(K_{\beta}f)^2\,{\rm d}\nu
\leq \|G_{\beta}\nu\|_{\infty}{\cal E}_{\beta}(K_{\beta}f,K_{\beta}f)<\infty
$$
so that $I_{\nu}K_{\beta}f\in L^2(\nu)$. 
Combining this with the relation
$$
{\cal E}_{\beta}(K_{\beta}f,K_{\beta}f)
=\int_{{\mathbb R}^d}(I_{\nu}K_{\beta}f) f\,{\rm d}\nu
\leq \sqrt{\int_{{\mathbb R}^d}(I_{\nu}K_{\beta}f)^2\,{\rm d}\nu}\sqrt{\int_{{\mathbb R}^d}f^2\,{\rm d}\nu},
$$
we get 
$\|I_{\nu}K_{\beta}\|\leq \|G_{\beta}\nu\|_{\infty}$. 
Since  $\nu$ is a Kato class measure , we have $\|G_{\beta}\nu\|_{\infty}\rightarrow 0$ 
as $\beta\rightarrow\infty$ so that the desired assertion holds.
\qed
\end{proof}

Lemma \ref{lem:bdd} implies that for any $\beta>\beta_0$, we can define $(1-I_{\nu}K_{\beta})^{-1}$ 
as a bounded linear operator on $L^2(|\nu|)$. 

\begin{lem}\label{lem:resol-1}
Let $\beta_0$ and $\nu^{\pm}$ be as in Lemma {\rm \ref{lem:bdd}}. 
Then for any $\beta>\beta_0$, 
$$G_{\beta}^{\nu}f-G_{\beta}f=K_{\beta}((1-I_{\nu}K_{\beta})^{-1}I_{\nu}G_{\beta}f), \quad f\in L^2({\mathbb R}^d).$$
\end{lem}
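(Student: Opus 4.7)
The plan is to work at the level of the bilinear form. Set $u := G_\beta^\nu f$; since $G_\beta^\nu$ is the resolvent of the lower-bounded closed form $({\cal E}^\nu, {\cal F})$, we have $u \in {\cal F}$ and $u$ is characterized by the variational identity ${\cal E}^\nu_\beta(u, v) = (f, v)_{L^2}$ for every $v \in {\cal F}$. The Stollmann--Voigt inequality \eqref{eq:sv} guarantees that $I_\nu u$ and $I_\nu v$ belong to $L^2(|\nu|)$, so expanding this identity and then subtracting the corresponding identity ${\cal E}_\beta(G_\beta f, v) = (f,v)_{L^2}$ for the unperturbed resolvent gives
\begin{equation*}
{\cal E}_\beta(u - G_\beta f, v) = \int_{{\mathbb R}^d} I_\nu u \cdot I_\nu v\, d\nu, \quad v \in {\cal F}.
\end{equation*}

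Next I would convert this into an identity in ${\cal F}$. Recall from the discussion just preceding Lemma \ref{lem:bdd} that for any $g \in L^2(|\nu|)$, $K_\beta g \in {\cal F}$ and ${\cal E}_\beta(K_\beta g, v) = \int g \cdot I_\nu v\, d\nu$ for every $v \in {\cal F}$. Applying this with $g = I_\nu u$ and using non-degeneracy of ${\cal E}_\beta$ yields the first-step resolvent identity
\begin{equation*}
u - G_\beta f = K_\beta(I_\nu u) \quad \text{in } {\cal F}.
\end{equation*}

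To finish, apply $I_\nu$ to both sides to obtain $(1 - I_\nu K_\beta)\, I_\nu u = I_\nu G_\beta f$ in $L^2(|\nu|)$. For $\beta > \beta_0$, Lemma \ref{lem:bdd} guarantees $\|I_\nu K_\beta\| < 1$, so $1 - I_\nu K_\beta$ is invertible on $L^2(|\nu|)$ and $I_\nu u = (1 - I_\nu K_\beta)^{-1} I_\nu G_\beta f$. Substituting this back into $u - G_\beta f = K_\beta(I_\nu u)$ produces the desired formula.

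The principal technical point is justifying the variational identity ${\cal E}^\nu_\beta(G_\beta^\nu f, v) = (f, v)_{L^2}$ for all $v \in {\cal F}$ together with $G_\beta^\nu f \in {\cal F}$; both follow from the definition of $G_\beta^\nu$ as the resolvent of the self-adjoint operator ${\cal H}^\nu$ associated with the closed form $({\cal E}^\nu, {\cal F})$, and from the Stollmann--Voigt inequality \eqref{eq:sv} that controls the potential term. Once the first-step identity is in place, the remainder is the routine Neumann-series inversion made available by Lemma \ref{lem:bdd}. A purely probabilistic alternative would expand $e^{A_t^\nu}$ as a Dyson series and iterate the strong Markov property at the successive times $s_1 < \cdots < s_n$, producing the geometric series $\sum_{n\geq 0} K_\beta (I_\nu K_\beta)^n I_\nu G_\beta f$ directly, but the form-theoretic route above seems cleaner.
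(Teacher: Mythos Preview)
Your argument is correct and uses essentially the same form-theoretic approach as the paper: both rely on the variational characterization of $G_\beta^\nu f$ via ${\cal E}_\beta^\nu$ together with the identity ${\cal E}_\beta(K_\beta g, v) = \int g\, I_\nu v\, d\nu$. The only difference is direction: the paper defines the candidate $G_\beta f + K_\beta u$ with $u = (1 - I_\nu K_\beta)^{-1} I_\nu G_\beta f$ and verifies that it satisfies ${\cal E}_\beta^\nu(\cdot, v) = (f, v)_{L^2}$, whereas you start from $u = G_\beta^\nu f$ and derive the formula; these are two sides of the same computation.
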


\begin{proof}
As in Lemma \ref{lem:bdd}, we may assume that $\nu^{-}=0$. Fix $\beta>\beta_0$ and $f\in L^2({\mathbb R}^d)$. 
Then by Lemma \ref{lem:bdd}, 
we can define the bounded linear operator $(1-I_{\nu}K_{\beta})^{-1}$ on $L^2(\nu)$ 
and $u=(1-I_{\nu}K_{\beta})^{-1}I_{\nu}G_{\beta} f\in L^2(\nu)$. 
For any $v\in {\cal F}$, 
\begin{equation*}
\begin{split}
{\cal E}_{\beta}^{\nu}(G_{\beta}f+K_{\beta}u,v)
&={\cal E}_{\beta}(G_{\beta}f+K_{\beta}u,v)
-\int_{{\mathbb R}^d}I_{\nu}(G_{\beta}f+K_{\beta}u)\cdot I_{\nu}v\,{\rm d}\nu\\
&=\int_{{\mathbb R}^d}fv\,{\rm d}x+\int_{{\mathbb R}^d} u \cdot I_{\nu}v\,{\rm d}\nu
-\int_{{\mathbb R}^d}I_{\nu}(G_{\beta}f+K_{\beta}u)\cdot I_{\nu}v\,{\rm d}\nu.
\end{split}
\end{equation*}
Since $I_{\nu}(G_{\beta}f+K_{\beta}u)=u$, 
we have ${\cal E}_{\beta}^{\nu}(G_{\beta}f+K_{\beta}u,v)=\int_{{\mathbb R}^d}fv\,{\rm d}x$ 
so that the proof is complete. \qed
\end{proof}

\begin{lem}\label{lem:cpt}
Let $\nu^+$ and $\nu^{-}$ be finite Kato class measures, 
and let $\beta_0$ be as in Lemma {\rm \ref{lem:bdd}}. 
\begin{enumerate}
\item[{\rm (i)}] For any $\beta>0$, $K_{\beta}$ is a compact operator 
from $L^2(|\nu|)$ to $L^2({\mathbb R}^d)$.  
\item[{\rm (ii)}] For any $\beta>\beta_0$, 
$G_{\beta}^{\nu}-G_{\beta}$ is a compact operator on $L^2({\mathbb R}^d)$.
\end{enumerate}\end{lem}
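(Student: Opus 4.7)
The plan is to prove (i) by showing that $K_\beta$ is a norm limit of compact operators obtained by truncating the time integral in the semigroup representation $K_\beta=\int_0^\infty e^{-\beta t}P_t\,{\rm d}t$, and then to deduce (ii) from (i) via Lemma \ref{lem:resol-1}.

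For (i), introduce the operator $P_t\colon L^2(|\nu|)\to L^2({\mathbb R}^d)$ by $P_tf(x)=\int p_t(x,y)f(y)\,\nu({\rm d}y)$, so that $K_\beta f=\int_0^\infty e^{-\beta t}P_tf\,{\rm d}t$. By \eqref{eq:scaling} and the Chapman--Kolmogorov identity,
\begin{equation*}
\|P_t\|_{\mathrm{HS}}^2
=\int_{{\mathbb R}^d}\!\int_{{\mathbb R}^d} p_t(x,y)^2\,{\rm d}x\,|\nu|({\rm d}y)
=\int p_{2t}(y,y)\,|\nu|({\rm d}y)
=\frac{g(0)\,|\nu|({\mathbb R}^d)}{(2t)^{d/\alpha}},
\end{equation*}
so each $P_t$ is Hilbert--Schmidt, and the map $t\mapsto P_t$ is continuous in the Hilbert--Schmidt norm on $[t_0,\infty)$ for every $t_0>0$ by dominated convergence. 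Consequently, for each $t_0>0$, the Bochner integral $A_{t_0}:=\int_{t_0}^\infty e^{-\beta t}P_t\,{\rm d}t$ is compact, being an operator-norm limit of compact operators.

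For the remainder $B_{t_0}:=K_\beta-A_{t_0}=\int_0^{t_0}e^{-\beta t}P_t\,{\rm d}t$, Cauchy--Schwarz applied to the measure $p_t(x,y)\,{\rm d}t\otimes|\nu|({\rm d}y)$ yields
\begin{equation*}
|B_{t_0}f(x)|^2 \leq \eta(t_0)\int_0^{t_0}\!\int p_t(x,y)f(y)^2\,|\nu|({\rm d}y)\,{\rm d}t,\qquad
\eta(t_0):=\sup_{x}\int_0^{t_0}\!\int p_t(x,y)\,|\nu|({\rm d}y)\,{\rm d}t.
\end{equation*}
The bound $\eta(t_0)\leq e^{\beta t_0}\|G_\beta|\nu|\|_\infty$ combined with the Kato class property ($\beta\to\infty$ first, then $t_0\to 0$) gives $\eta(t_0)\to 0$. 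Integrating in $x$ and using Fubini on the remaining factor, $\|B_{t_0}\|^2\leq t_0\,\eta(t_0)\to 0$. Hence $K_\beta$ is a norm limit of compact operators, and thus compact.

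For (ii), Lemma \ref{lem:resol-1} provides the factorisation $G_\beta^\nu-G_\beta=K_\beta(1-I_\nu K_\beta)^{-1}I_\nu G_\beta$ for $\beta>\beta_0$. The operator $I_\nu G_\beta\colon L^2({\mathbb R}^d)\to L^2(|\nu|)$ is bounded by \eqref{eq:sv} together with ${\cal E}_\beta(G_\beta f,G_\beta f)\leq\|f\|_{L^2}^2/\beta$; the operator $(1-I_\nu K_\beta)^{-1}$ is bounded on $L^2(|\nu|)$ by Lemma \ref{lem:bdd}; and $K_\beta$ is compact by (i). Composing a compact operator with bounded operators yields a compact operator, so (ii) follows. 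The main obstacle lies in (i): $\|P_t\|_{\mathrm{HS}}$ grows like $t^{-d/(2\alpha)}$ as $t\to 0$, so once $d\geq 2\alpha$ the full integral $\int_0^\infty e^{-\beta t}P_t\,{\rm d}t$ does not converge in the Hilbert--Schmidt norm. The Kato class condition is precisely what allows the short-time contribution to be absorbed as an operator-norm error rather than relying on integrability of the Hilbert--Schmidt norm.
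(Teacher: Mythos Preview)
Your proof is correct, but it follows a genuinely different route from the paper's.

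The paper approximates $K_\beta$ by truncating the resolvent kernel in \emph{space}: it sets $K_\beta^{(n)}f(x)=\int_{|x-y|\geq 1/n}G_\beta(x,y)f(y)\,\nu({\rm d}y)$, shows this is Hilbert--Schmidt using the large-$r$ asymptotic \eqref{eq:res-inf}, and then proves $\|K_\beta-K_\beta^{(n)}\|\to 0$ via the explicit small-$r$ behaviour \eqref{eq:res-0} of $w_\beta$, splitting into the three cases $d<\alpha$, $d=\alpha$, $d>\alpha$. Your argument instead truncates in \emph{time}: you remove the short-time part of the semigroup integral, obtain a Hilbert--Schmidt (hence compact) Bochner integral on $[t_0,\infty)$, and control the remainder $B_{t_0}$ in operator norm using only the abstract Kato condition. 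This avoids the case analysis and the detailed diagonal asymptotics entirely, and would transfer verbatim to any symmetric Markov semigroup with a bounded on-diagonal density and a Kato class defined through the resolvent. The paper's approach, by contrast, gives an explicit rate $\|K_\beta-K_\beta^{(n)}\|=O(n^{-\alpha/2})$, which your argument does not.

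Two small remarks. First, your justification that $\eta(t_0)\to 0$ is correct but stronger than needed: since $\eta$ is nondecreasing and $\eta(1)\leq e^{\beta}\|G_\beta|\nu|\|_\infty<\infty$, the bound $\|B_{t_0}\|^2\leq t_0\,\eta(t_0)\leq t_0\,\eta(1)\to 0$ already suffices. Second, when $\nu$ is signed the kernel of $P_t$ is $p_t(x,y)h(y)$ with $h=\pm 1$ the polar density of $\nu$ with respect to $|\nu|$; since $|h|=1$ the Hilbert--Schmidt computation is unchanged, so your reduction to $\nu^-=0$ (as the paper also does) is harmless. Part (ii) is handled identically in both proofs via Lemma~\ref{lem:resol-1}.
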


\begin{proof}
As in the previous lemmas, we may assume that $\nu^{-}=0$. 
We first prove (i). For $n=1,2,3,\dots$, we define 
$$K_{\beta}^{(n)} f(x)=\int_{|x-y|\geq 1/n}G_{\beta}(x,y)f(y)\,\nu({\rm d}y), \quad f\in L^2(\nu).$$
Since \eqref{eq:res-inf} yields 
$$\iint_{{\mathbb R}^d\times{\mathbb R}^d}
\left(G_{\beta}(x,y){\bf 1}_{|x-y|\geq 1/n}\right)^2\,\nu({\rm d}y){\rm d}x
\leq c_1\nu({\mathbb R}^d)\int_{|x|\geq 1/n}\frac{{\rm d}x}{|x|^{2(d+\alpha)}}<\infty,$$
$K_{\beta}^{(n)}$ is a Hilbert-Schmidt operator so that 
it is compact from $L^2(\nu)$ to $L^2({\mathbb R}^d)$ 
(see, e.g., \cite[Corollary 4.6]{HS78}). 

We now assume that $d>\alpha$. 
Let $\varepsilon\in (0,\alpha/2)$ and 
$$k_1^{(n)}(x,y)=\frac{{\bf 1}_{\{|x-y|<1/n\}}}{|x-y|^{d/2-\varepsilon}}, 
\quad k_2^{(n)}(x,y)=\frac{{\bf 1}_{\{|x-y|<1/n\}}}{|x-y|^{d/2-\alpha+\varepsilon}}.$$
By \eqref{eq:res-0}, there exists $c_1>0$ such that 
for any $n=1,2,3,\dots$, 
$$G_{\beta}(x,y){\bf 1}_{\{|x-y|<1/n\}}\leq c_1 k_1^{(n)}(x,y)k_2^{(n)}(x,y), \ x,y\in {\mathbb R}^d$$
and 
$$K_{\beta}f(x)-K_{\beta}^{(n)}f(x)
=\int_{|x-y|<1/n}G_{\beta}(x,y)f(y)\,\nu({\rm d}y), \ x\in {\mathbb R}^d.$$
Therefore, 
\begin{equation}\label{eq:norm}
\begin{split}
&\|K_{\beta}f-K_{\beta}^{(n)}f\|_{L^2({\mathbb R}^d)}^2
=\int_{{\mathbb R}^d}\left(\int_{|x-y|<1/n}G_{\beta}(x,y)f(y)\,\nu({\rm d}y)\right)^2\,{\rm d}x\\
&\leq c_2\int_{{\mathbb R}^d}
\left(\int_{{\mathbb R}^d}k_1^{(n)}(x,y)^2f(y)^2\,\nu({\rm d}y)\right)
\left(\int_{{\mathbb R}^d}k_2^{(n)}(x,y)^2\,\nu({\rm d}y)\right)\,{\rm d}x\\
&\leq c_2\left\{\int_{{\mathbb R}^d}
\left(\int_{{\mathbb R}^d}k_1^{(n)}(x,y)^2f(y)^2\,\nu({\rm d}y)\right)
\,{\rm d}x\right\}
\sup_{x\in {\mathbb R}^d}\left(\int_{{\mathbb R}^d}k_2^{(n)}(x,y)^2\,\nu({\rm d}y)\right).
\end{split}
\end{equation}

Let $\omega_d=2\pi^{d/2}\Gamma(d/2)^{-1}$ be the surface area of the unit ball in ${\mathbb R}^d$. 
Then by the Fubini theorem,
\begin{equation}\label{eq:k1}
\begin{split}
&\int_{{\mathbb R}^d}
\left(\int_{{\mathbb R}^d}k_1^{(n)}(x,y)^2f(y)^2\,\nu({\rm d}y)\right)\,{\rm d}x\\
&=\int_{{\mathbb R}^d}
\left(\int_{{\mathbb R}^d}k_1^{(n)}(x,y)^2\,{\rm d}x\right)
\,f(y)^2\nu({\rm d}y)
=\frac{\omega_d\|f\|_{L^2(\nu)}^2}{2\varepsilon n^{2\varepsilon}}.
\end{split}
\end{equation}
Since $\alpha>2\varepsilon$, we have by \eqref{eq:res-0},
\begin{equation*}
\begin{split}
&\int_{{\mathbb R}^d}k_2^{(n)}(x,y)^2\,\nu({\rm d}y)
=\int_{|x-y|<1/n}|x-y|^{-d+2\alpha-2\varepsilon}\,\nu({\rm d}y)\\
&\leq \frac{1}{n^{\alpha-2\varepsilon}}\int_{|x-y|<1/n}|x-y|^{-d+\alpha}\,\nu({\rm d}y)
\leq \frac{c_3}{n^{\alpha-2\varepsilon}}\int_{{\mathbb R}^d}G_1(x,y)\,\nu({\rm d}y).
\end{split}
\end{equation*}
Hence  
$$\sup_{x\in {\mathbb R}^d}\int_{{\mathbb R}^d}k_2^{(n)}(x,y)^2\,\nu({\rm d}y)
\leq \frac{c_3}{n^{\alpha-2\varepsilon}}\|G_1{\nu}\|_{\infty}.$$
Combining this inequality with \eqref{eq:k1}, 
we see by \eqref{eq:norm} that 
$$\|K_{\beta}-K_{\beta}^{(n)}\|:=\sup_{f\in L^2(\nu),  f\ne 0}
\frac{\|K_{\beta}f-K_{\beta}^{(n)}f\|_{L^2({\mathbb R}^d)}}{\|f\|_{L^2(\nu)}}
\leq \frac{c_4}{\varepsilon n^{\alpha/2}}\rightarrow 0 \quad (n\rightarrow\infty).$$
For $d\leq \alpha$, we also have 
$\|K_{\beta}-K_{\beta}^{(n)}\|\rightarrow 0$ as $n\rightarrow\infty$ 
by \eqref{eq:res-0} and direct calculation. 
Since $K_{\beta}^{(n)}$ is compact, 
so is $K_{\beta}$ by \cite[Theorem VI.12]{RS4}. 
This completes the proof of  (i).

Since $G_{\beta}$ is a bounded linear operator from 
$L^2({\mathbb R}^d)$ to $({\cal F},\sqrt{{\cal E}_{\beta}})$ 
and $(1-I_{\nu}K)^{-1}I_{\nu}$ is a bounded linear operator from $({\cal F},\sqrt{{\cal E}_{\beta}})$ to $L^2(\nu)$, 
Lemma \ref{lem:resol-1} and (i) imply (ii). 
\qed
\end{proof}

\noindent
{\it Proof of Proposition {\rm \ref{prop:ess}}.} 
Since $\sigma_{\rm ess}((-\Delta)^{\alpha/2}/2)=[0,\infty)$, 
the assertion  follows by Lemma \ref{lem:cpt} and \cite[Theorem VIII.14]{RS4}. 
\qed

\begin{rem}\rm 
Let $\nu^{+}$ and $\nu^{-}$ be Kato class measures such that 
$\nu=\nu^{+}-\nu^{-}$ forms a signed Borel measure on ${\mathbb R}^d$. 
If $\tilde{\nu}^+-\tilde{\nu}^{-}$ is the Jordan decomposition of $\nu$, 
then $\tilde{\nu}^+$ and $\tilde{\nu}^{-}$ are also Kato class measures and 
$A_t^{\nu^{+}}-A_t^{{\nu}^{-}}=A_t^{\tilde{\nu}^+}-A_t^{\tilde{\nu}^{-}}$ 
by the uniqueness of the Revuz correspondence (\cite[Theorem 5.13]{FOT11}). 
In particular, Proposition \ref{prop:ess} is true as it is 
even if $\nu=\nu^{+}-\nu^{-}$ is not the Jordan decomposition of $\nu$.
\end{rem}

We next discuss the asymptotic behavior of an integral 
associated with the ground state of ${\cal H}^{\nu}$. 
In what follows, 
we may and do assume that $\nu$ can be decomposed as $\nu=\nu^+-\nu^{-}$ 
for some $\nu^{+}, \nu^{-}\in {\cal K}_{\infty}(1)$. 
Let $\lambda(\nu)$ be the bottom of the $L^2$-spectrum of ${\cal H}^{\nu}$. 
Then 
$$
\lambda(\nu)
=\inf\left\{{\cal E}(u,u)-\int_{{\mathbb R}^d}u^2\,{\rm d}\nu
\mid u\in C_0^{\infty}({\mathbb R}^d), \int_{{\mathbb R}^d}u^2\,{\rm d}x=1\right\},
$$
where $C_0^{\infty}({\mathbb R}^d)$ is the totality of smooth functions 
with compact support in ${\mathbb R}^d$.
Moreover, if $\lambda(\nu)<0$, then $\lambda(\nu)$ is the eigenvalue 
and the corresponding eigenfunction, which is called the ground state, 
has a bounded and strictly positive continuous version 
(\cite[Theorem 2.8 and Section 4]{T08}).  
We write $h$ for this version with $L^2$-normalization $\|h\|_{L^2({\mathbb R}^d)}=1$. 

By the same proof as for \cite[Lemma 4.1]{T08} and \cite[Lemma A.1]{S19}, 
we see that for any positive constants $p$ and $p'$ with $p'<1<p$, 
there exist positive constants $c$ and $C$ such that 
\begin{equation}\label{eq:ground-est}
\frac{c}{|x|^{(d+\alpha)p}}\leq h(x)\leq \frac{C}{|x|^{(d+\alpha)p'}} \quad (|x|\geq 1).  
\end{equation}If $\nu^{+}$ and $\nu^{-}$ are in addition compactly supported in ${\mathbb R}^d$, 
then \eqref{eq:ground-est} is valid for $p=p'=1$. 

Let $\lambda:=\lambda(\nu)$ and 
\begin{equation}\label{eq:const}
c_{\star}=\frac{C_{d,\alpha}\omega_d}{\alpha (-\lambda)^2}
=\frac{\sin(\pi\alpha/2)\Gamma((d+\alpha)/2)\Gamma(\alpha/2)}{(-\lambda)^2 2^{1-\alpha}\pi\Gamma(d/2)}.
\end{equation}
The next lemma determines the asymptotic behavior 
of the ground state $h$ integrated outside the ball.
\begin{lem}\label{lem:ground}
Suppose that $\lambda<0$. 
\begin{enumerate}
\item[{\rm (i)}] 
For any $x\in {\mathbb R}^d$,
$$h(x)=\int_{{\mathbb R}^d}G_{-\lambda}(x,y)h(y)\,\nu({\rm d}y).$$
\item[{\rm (ii)}] 
If $\nu^{+}$ and $\nu^{-}$ are compactly supported in ${\mathbb R}^d$, 
then $\int_{{\mathbb R}^d}h(y)\,\nu({\rm d}y)>0$ and 
\begin{equation}\label{eq:lim-tail}
R^{\alpha}\int_{|y|>R}h(y)\,{\rm d}y \rightarrow 
c_{\star}\int_{{\mathbb R}^d}h(y)\,\nu({\rm d}y) 
\quad (R\rightarrow\infty).
\end{equation}
\end{enumerate}
\end{lem}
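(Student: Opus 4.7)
The strategy is to reduce (ii) to a resolvent-kernel tail computation via the convolution representation in (i). For part (i), I use the variational characterization of the ground state. Since $\lambda<0$, the eigenvalue identity ${\cal H}^{\nu}h=\lambda h$ is equivalent to
$${\cal E}_{-\lambda}(h,v)=\int_{{\mathbb R}^d}hv\,{\rm d}\nu\quad\text{for every }v\in{\cal F}.$$
Because $h\in{\cal F}$ is bounded and $\nu^{\pm}\in{\cal K}$, the Stollmann--Voigt inequality \eqref{eq:sv} gives $h\in L^{2}(|\nu|)$, so $K_{-\lambda}h=G_{-\lambda}(h\nu)$ is well defined, and by the characterization of $K_{\beta}$ recalled just before Lemma \ref{lem:bdd} it satisfies the same identity with $h$ replaced by $K_{-\lambda}h$. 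Coercivity of ${\cal E}_{-\lambda}$ on ${\cal F}$ then forces $h=K_{-\lambda}h$ in ${\cal F}$; passing to the bounded continuous version on both sides delivers the pointwise formula.

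For part (ii), I first derive the pointwise tail asymptotic of $h$. Let $K:=\supp\nu^{+}\cup\supp\nu^{-}$, which is compact by assumption. For $|x|\to\infty$ and $y\in K$, $|x-y|/|x|\to 1$ uniformly, so \eqref{eq:res-inf} yields
$$|x|^{d+\alpha}G_{-\lambda}(x,y)=\left(\frac{|x|}{|x-y|}\right)^{d+\alpha}|x-y|^{d+\alpha}w_{-\lambda}(|x-y|)\longrightarrow\frac{C_{d,\alpha}}{(-\lambda)^{2}}$$
uniformly in $y\in K$. Since $h$ is bounded on $K$ and $|\nu|$ is finite, dominated convergence applied to (i) gives
$$\lim_{|x|\to\infty}|x|^{d+\alpha}h(x)=\frac{C_{d,\alpha}}{(-\lambda)^{2}}\int_{{\mathbb R}^d}h\,{\rm d}\nu.$$
The lower bound in \eqref{eq:ground-est} applied with $p=p'=1$ (valid under compact support of $\nu^{\pm}$) gives $h(x)\geq c|x|^{-(d+\alpha)}$ for $|x|\geq 1$, forcing the limit above to be at least $c>0$; in particular $\int h\,{\rm d}\nu>0$.

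For the tail integral I would scale via $y=Rz$:
$$R^{\alpha}\int_{|y|>R}h(y)\,{\rm d}y=\int_{|z|>1}R^{d+\alpha}h(Rz)\,{\rm d}z.$$
By the pointwise asymptotic just obtained, $R^{d+\alpha}h(Rz)\to C_{d,\alpha}(-\lambda)^{-2}|z|^{-(d+\alpha)}\int h\,{\rm d}\nu$ for each $|z|>1$, and the upper bound in \eqref{eq:ground-est} supplies the integrable dominant $C|z|^{-(d+\alpha)}$ on $\{|z|>1\}$. Dominated convergence combined with $\omega_{d}\int_{1}^{\infty}r^{-\alpha-1}\,{\rm d}r=\omega_{d}/\alpha$ produces the constant $C_{d,\alpha}\omega_{d}/(\alpha(-\lambda)^{2})$, which is exactly $c_{\star}$ by \eqref{eq:const}. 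The only genuinely delicate point is the uniform tail control of $G_{-\lambda}(x,\cdot)$ on $K$ needed for the first dominated convergence step; compactness of $K$ renders this straightforward, and this is precisely why the compact-support hypothesis appears in (ii).
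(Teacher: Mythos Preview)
Your argument is correct. For (i) you give a self-contained weak-formulation proof (eigenvalue identity plus the potential identity ${\cal E}_{\beta}(K_{\beta}f,v)=\int f\,I_{\nu}v\,{\rm d}\nu$ and coercivity of ${\cal E}_{-\lambda}$), whereas the paper simply cites the corresponding lemma from \cite{NS21+}; your route is the natural one and is presumably what that reference does as well. For (ii) the paper proceeds in the opposite order from you: it substitutes (i) into $\int_{|y|>R}h(y)\,{\rm d}y$, applies Fubini, and uses \eqref{eq:res-inf} directly to $\int_{|y|>R}G_{-\lambda}(y,z)\,{\rm d}y$ for $z\in\supp|\nu|$, never isolating the pointwise behaviour of $h$. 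Your version first extracts the sharper statement $|x|^{d+\alpha}h(x)\to C_{d,\alpha}(-\lambda)^{-2}\int h\,{\rm d}\nu$ and then integrates by scaling; this costs nothing extra (the uniform control on $G_{-\lambda}(x,\cdot)$ over the compact set $K$ is immediate, as you note) and in fact upgrades the two-sided bound \eqref{eq:ground-est} with $p=p'=1$ to an exact asymptotic. The positivity of $\int h\,{\rm d}\nu$ is obtained the same way in both proofs, via the lower bound in \eqref{eq:ground-est}.
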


\begin{proof}
Let $\nu^+$ and $\nu^{-}$ belong to ${\cal K}_{\infty}(1)$ and $\lambda<0$. 
Since \cite[Lemma 3.1 (i)]{NS21+} and its proof remain valid under the current setting,  
we have (i) in the same way as for the proof of \cite[Lemma 3.1 (iii)]{NS21+}.

We assume in addition that $\nu^+$ and $\nu^{-}$ are compactly supported in ${\mathbb R}^d$. 
Then by \eqref{eq:ground-est} with $p=p'=1$, $\int_{|y|>R}h(y)\,{\rm d}y$ is convergent for any $R>0$. 
Since (i) yields 
\begin{equation*}
\begin{split}
\int_{|y|>R}h(y)\,{\rm d}y
&=\int_{|y|>R}\left(\int_{{\mathbb R}^d}G_{-\lambda}(y,z)h(z)\,\nu^+({\rm d}z)\right)\,{\rm d}y\\
&-\int_{|y|>R}\left(\int_{{\mathbb R}^d}G_{-\lambda}(y,z)h(z)\,\nu^{-}({\rm d}z)\right)\,{\rm d}y,
\end{split}
\end{equation*}
we have by \eqref{eq:res-inf},
\begin{equation*}
\begin{split}
\int_{|y|>R}\left(\int_{{\mathbb R}^d}G_{-\lambda}(y,z)h(z)\,\nu^{\pm}({\rm d}z)\right)\,{\rm d}y
&\sim \frac{C_{d,\alpha}}{(-\lambda)^2}\int_{|y|>R}\frac{{\rm d}y}{|y|^{d+\alpha}}
\int_{{\mathbb R}^d}h(z)\,\nu^{\pm}({\rm d}z)\\
&=\frac{c_{\star}}{R^{\alpha}}
\int_{{\mathbb R}^d}h(z)\,\nu^{\pm}({\rm d}z),
\end{split}
\end{equation*}
whence \eqref{eq:lim-tail} holds. 
Moreover, since  
there exist $c_1>0$ and $c_2>0$ by \eqref{eq:ground-est}  
such that $c_1\leq R^{\alpha}\int_{|y|>R}h(y)\,{\rm d}y\leq c_2$ for any $R\geq 1$, 
we obtain $\int_{{\mathbb R}^d}h(y)\,\nu({\rm d}y)>0$.  \qed
\end{proof}

For $\alpha=2$, we proved in \cite[Lemma 3.1 (iv)]{NS21+} 
the assertion corresponding to Lemma \ref{lem:ground} (ii), 
but the scaling order there is exponential in contrast with the polynomial order in \eqref{eq:lim-tail}.

Suppose that $\nu^{+}$ and $\nu^{-}$ are Kato class measures with compact support in ${\mathbb R}^d$ 
and $\lambda<0$. 
Then for any $\beta>-\lambda$, we have
$$
\inf\left\{{\cal E}_{\beta}(u,u)-\int_{{\mathbb R}^d}u^2\,{\rm d}\nu
\mid u\in C_0^{\infty}({\mathbb R}^d), \int_{{\mathbb R}^d}u^2\,{\rm d}x=1\right\}
=\beta+\lambda>0
$$
so that by \cite[Lemma 3.5]{T02},
$$
\inf\left\{{\cal E}_{\beta}(u,u)+\int_{{\mathbb R}^d}u^2\,{\rm d}\nu^{-}
\mid u\in C_0^{\infty}({\mathbb R}^d), \int_{{\mathbb R}^d}u^2\,{\rm d}\nu^{+}=1\right\}>1.
$$
Hence by \cite[Lemma 3.5 (1), Theorems 3.6 and 5.2]{C02} 
with Lemma \ref{lem:resol} (ii), 
there exist positive constants 
$c$ and $C$ for any $\beta>-\lambda$ such that 
\begin{equation}\label{eq:res-comp}
cG_{\beta}(x,y)\leq G_{\beta}^{\nu}(x,y)\leq C G_{\beta}(x,y).
\end{equation}

Let $\lambda_2(\nu)$ be the second bottom of the spectrum for ${\cal H}^{\nu}$,
$$\lambda_2(\nu)=\inf\left\{{\cal E}(u,u)-\int_{{\mathbb R}^d}u^2\,{\rm d}\nu 
\mid u\in C_0^{\infty}({\mathbb R^d}), 
\int_{{\mathbb R}^d}u^2\,{\rm d}x=1, \int_{{\mathbb R}^d}uh\,{\rm d}x=0\right\}.$$
If $\lambda<0$, then Proposition \ref{prop:ess} implies that 
$(\lambda<)\lambda_2(\nu)\leq 0$.
Then as in \cite[Lemma 3.1 (ii)]{NS21+}, there exists $C>0$ such that 
\begin{equation}\label{eq:gap}
|p_t^{\nu}(x,y)-e^{-\lambda t}h(x)h(y)|\leq Ce^{-\lambda_2 t}, \quad t\geq 1.
\end{equation}

\subsection{Asymptotic behaviors of Feynman-Kac functionals}\label{subsect:asymp-fk}
In this subsection, we prove the asymptotic properties of 
the Feynman-Kac functionals for a symmetric stable process.  
Even though our approach is similar to that of \cite[Proposition 3.2]{NS21+}, 
we need calculations by taking into account
the polynomial decay property of the tail distribution 
of the symmetric stable process  in \eqref{eq:bg-asymp}. 
Throughout this subsection, 
we assume that $\nu^+$ and $\nu^-$ are 
the Kato class measures with compact support in ${\mathbb R}^d$ 
and that $\lambda<0$.  

Let 
\begin{equation}\label{eq:decomp-0}
q_t(x,y)=p_t^{\nu}(x,y)-p_t(x,y)-e^{-\lambda t}h(x)h(y)
\end{equation}
so that for $R>0$, 
\begin{equation}\label{eq:decomp}
E_x[e^{A_t^{\nu}};|X_t|>R]
=P_x(|X_t|>R)+e^{-\lambda t}h(x)\int_{|y|>R}h(y)\,{\rm d}y+\int_{|y|>R}q_t(x,y)\,{\rm d}y.
\end{equation}
For $r>0$, let $B(r)=\{y\in {\mathbb R}^d \mid |y|<r\}$ be an open ball 
with radius $r$ centered at the origin. 
Fix $M>0$ so that the support of $|\nu|$ is included in $B(M)$. 
For $c>0$, define 
$$I_c(t,R)=
\begin{cases}
e^{ct}/R^{\alpha} & (\lambda_2<0), \\
(tP_0(|X_t|>R-M))\wedge (e^{ct}/R^{\alpha}) & (\lambda_2=0)
\end{cases}
$$
and
$$J(t,R)=e^{-\lambda t}(R-M)^d
\int_{t^{1/\alpha}}^{\infty}e^{\lambda u^{\alpha}}g\left(\frac{R-M}{u}\right)\frac{{\rm d}u}{u^{d+1}},$$
where $g$ is the same function as in \eqref{eq:scaling}.

\begin{prop}\label{prop:q}
For any $c>0$ with $c>-\lambda_2$, 
there exists $C>0$ such that for any $x\in {\mathbb R}^d$, $t\geq 1$ and $R>2M$, 
$$\left|\int_{|y|>R}q_t(x,y)\,{\rm d}y\right|
\leq C\left(h(x)P_0(|X_t|>R-M)+I_c(t,R)+h(x)J(t,R)\right).$$
\end{prop}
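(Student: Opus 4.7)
The plan is to decompose $q_t(x,y)$ using Duhamel's perturbation formula together with the ground-state identity from Lemma \ref{lem:ground}(i), and then bound the three resulting pieces against the three terms in the target bound. Concretely, starting from the Duhamel identity
\begin{equation*}
p_t^{\nu}(x,y) - p_t(x,y) = \int_0^t\int p_s(x,z)\,p_{t-s}^{\nu}(z,y)\,\nu(\mathrm{d}z)\,\mathrm{d}s
\end{equation*}
and rewriting $e^{-\lambda t}h(x)h(y) = h(y)\int_0^{\infty}e^{-\lambda(t-s)}\int p_s(x,z)h(z)\,\nu(\mathrm{d}z)\,\mathrm{d}s$ via Lemma \ref{lem:ground}(i), I would split both $s$-integrals at $t-1$ and use \eqref{eq:gap} on $[0,t-1]$ to write $p_{t-s}^{\nu}(z,y)=e^{-\lambda(t-s)}h(z)h(y)+r_{t-s}(z,y)$ with $|r_u(z,y)|\leq Ce^{-\lambda_2 u}$. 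After cancellation of the matching ground-state terms, $q_t(x,y)$ decomposes as $(\mathrm{I})+(\mathrm{II})-(\mathrm{III})$, where $(\mathrm{I})=\int_{t-1}^t\int p_s(x,z)p_{t-s}^{\nu}(z,y)\,\nu(\mathrm{d}z)\mathrm{d}s$ is the boundary-time piece, $(\mathrm{II})=\int_0^{t-1}\int p_s(x,z)r_{t-s}(z,y)\,\nu(\mathrm{d}z)\mathrm{d}s$ is the spectral-gap remainder, and $(\mathrm{III})=h(y)\int_{t-1}^{\infty}e^{-\lambda(t-s)}\int p_s(x,z)h(z)\,\nu(\mathrm{d}z)\mathrm{d}s$ is the ground-state tail.

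Next, I would integrate each piece over $\{|y|>R\}$. For $(\mathrm{I})$, on the short interval $s\in[t-1,t]$ the sharp stable heat-kernel bound combined with \eqref{eq:ground-est} gives $p_s(x,z)\leq Ch(x)$ for $z\in B(M)$; the inner tail equals $\int_{|y|>R}p_{t-s}^{\nu}(z,y)\mathrm{d}y = E_z[e^{A_{t-s}^{\nu}};|X_{t-s}|>R]$, which by a Khasminskii/Cauchy--Schwarz bound together with $P_z(|X_{t-s}|>R)\leq P_0(|X_{t-s}|>R-M)$ (from $|z|<M$) is controlled by $CP_0(|X_{t-s}|>R-M)$; monotonicity of the tail in time then yields the first term $h(x)P_0(|X_t|>R-M)$. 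For $(\mathrm{III})$, I would apply $\int_{|y|>R}h(y)\mathrm{d}y\leq c_{\star}R^{-\alpha}$ from Lemma \ref{lem:ground}(ii) and use monotonicity of $g$ to get $p_s(x,z)\leq p_s(0,(R-M)e_1)$ for $|x|>R$ and $z\in B(M)$; by \eqref{eq:scaling} and the change of variable $s=u^{\alpha}$, the remaining time integral produces precisely the integrand $e^{\lambda u^{\alpha}}g((R-M)/u)/u^{d+1}$ defining $J(t,R)$, with the $h(x)$ prefactor extracted via the same kernel-versus-ground-state comparison (and trivially when $|x|$ is bounded).

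For $(\mathrm{II})$, the pointwise gap estimate $|r_u|\leq Ce^{-\lambda_2 u}$ gives no decay in $y$, so I would instead use the nonnegative bound $|r_u(z,y)|\leq p_u^{\nu}(z,y)+e^{-\lambda u}h(z)h(y)$, which is integrable in $y$: the first summand is handled by the argument for $(\mathrm{I})$ and the second by Lemma \ref{lem:ground}(ii). Interpolating these with the spectral-gap estimate produces $e^{ct}/R^{\alpha}$ when $\lambda_2<0$; at the critical case $\lambda_2=0$ the exponential gain is lost, forcing the alternative $tP_0(|X_t|>R-M)$ coming from direct tail integration, which accounts for the minimum form in the definition of $I_c(t,R)$.

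The main obstacle is piece $(\mathrm{II})$: reconciling the uniform-in-$y$ spectral-gap estimate \eqref{eq:gap} with the required polynomial-in-$R$ decay through an interpolation against the pair of absolutely integrable bounds $p_u^{\nu}(z,y)$ and $e^{-\lambda u}h(z)h(y)$. The two different regimes of $I_c(t,R)$ in the statement are precisely the outcomes of this balancing in the subcritical ($\lambda_2<0$) and critical ($\lambda_2=0$) cases.
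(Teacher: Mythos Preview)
Your decomposition reverses the roles of the perturbed and unperturbed kernels compared to the paper, and this creates a genuine obstruction in piece $(\mathrm{I})$. You claim that for $s\in[t-1,t]$ and $z\in B(M)$ the stable heat-kernel bound together with \eqref{eq:ground-est} gives $p_s(x,z)\leq Ch(x)$, but this is false uniformly in $t$: for large $|x|$ one has $p_s(x,z)\asymp s|x|^{-(d+\alpha)}$ while $h(x)\asymp |x|^{-(d+\alpha)}$, so the ratio is of order $s\approx t$. The same growth appears when $|x|\lesssim t^{1/\alpha}$, where $p_s(x,z)\asymp t^{-d/\alpha}$ but $h(x)$ can be as small as $t^{-(d+\alpha)/\alpha}$. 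Thus $(\mathrm{I})$ cannot be absorbed into $h(x)P_0(|X_t|>R-M)$ without an extra factor of $t$, which the target bound does not allow. A related problem affects $(\mathrm{III})$: after integrating $h(y)$ over $\{|y|>R\}$ you are left with a function of $x$ alone, so the hypothesis $|x|>R$ in your monotonicity step $p_s(x,z)\leq p_s(0,(R-M)e_1)$ is unjustified; there is no way to extract both the prefactor $h(x)$ and the integral defining $J(t,R)$ from the single factor $p_s(x,z)$.

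The paper sidesteps both issues by taking the Duhamel identity in the opposite direction,
\[
p_t^{\nu}(x,y)-p_t(x,y)=\int_0^t\int p_s^{\nu}(x,z)\,p_{t-s}(z,y)\,\nu(\mathrm{d}z)\,\mathrm{d}s,
\]
and splitting at $s=1$ rather than $s=t-1$. The $x$-dependence then lives in $p_s^{\nu}(x,z)$: on $[0,1]$ the resolvent comparison \eqref{eq:res-comp} gives $\int_0^1 p_s^{\nu}(x,z)\,\mathrm{d}s\leq CG_{-\lambda+\varepsilon}(x,z)$, which is genuinely dominated by $h(x)$ via \eqref{eq:res-inf} and \eqref{eq:ground-est}; on $[1,t]$ the spectral gap \eqref{eq:gap} applies directly to $p_s^{\nu}(x,z)$ and produces the $h(x)$ prefactor in the main term automatically. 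The $R$-dependence sits entirely in the unperturbed tail $P_z(|X_{t-s}|>R)$, for which the explicit representation \eqref{eq:prob-r} yields $I_c$ and $J$ without any interpolation. In particular, the difficulty you flag for your piece $(\mathrm{II})$---reconciling the $y$-uniform gap bound with polynomial decay in $R$---does not arise in that orientation.
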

\begin{proof}
As for \cite[(3.19)]{NS21+}, 
we have 
\begin{equation}\label{eq:q-div}
\begin{split}
\int_{|y|>R}q_t(x,y)\,{\rm d}y
&=\int_0^1\left(\int_{{\mathbb R}^d}p_s^{\nu}(x,z)P_z(|X_{t-s}|>R)\,\nu({\rm d}z)\right)\,{\rm d}s\\
&+\int_1^t
\left(\int_{{\mathbb R}^d}(p_s^{\nu}(x,z)-e^{-\lambda s}h(x)h(z))P_z(|X_{t-s}|>R)\,\nu({\rm d}z)\right)\,{\rm d}s\\
&-e^{-\lambda t}h(x)\int_{t-1}^{\infty}e^{\lambda s}
\left(\int_{{\mathbb R}^d}h(z)P_z(|X_s|>R)\,\nu({\rm d}z)\right){\rm d}s\\
&={\rm (I)}+{\rm (II)}-{\rm (III)}.
\end{split}
\end{equation}
For any $s\in [0,t]$ and $z\in {\mathbb R}^d$, 
\begin{equation}\label{eq:comp}
P_z(|X_{t-s}|>R)\leq P_0(|X_{t-s}|>R-|z|)\leq P_0(|X_t|>R-|z|)
\end{equation}
by the spatial uniformity and scaling property of the symmetric stable process.
Then for any $\varepsilon>0$,  we see  
by \eqref{eq:res-inf}, 
\eqref{eq:ground-est} (with $p=p'=1$) and \eqref{eq:res-comp} (with $\beta=-\lambda+\varepsilon$) 
that 
\begin{equation*}
\begin{split}
&\int_0^1\left(\int_{{\mathbb R}^d}p_s^{\nu}(x,z)\,|\nu|({\rm d}z)\right)\,{\rm d}s
\leq e^{-\lambda +\varepsilon}
\int_{{\mathbb R}^d}
\left(\int_0^1e^{(\lambda-\varepsilon)s}p_s^{\nu}(x,z)\,{\rm d}s\right)\,|\nu|({\rm d}z)\\
&\leq e^{-\lambda +\varepsilon}
\int_{{\mathbb R}^d}G_{-\lambda+\varepsilon}^{\nu}(x,z)\,|\nu|({\rm d}z)
\leq c_1\int_{{\mathbb R}^d}G_{-\lambda+\varepsilon}(x,z)\,|\nu|({\rm d}z)
\leq c_2h(x).
\end{split}
\end{equation*}
Hence  by \eqref{eq:comp},
\begin{equation*}
\begin{split}
{\rm (I)}
&\leq P_0(|X_t|>R-M)\int_0^1\left(\int_{{\mathbb R}^d}p_s^{\nu}(x,z)\,|\nu|({\rm d}z)\right)\,{\rm d}s\\
&\leq c_3 P_0(|X_t|>R-M)h(x).
\end{split}
\end{equation*}

Fix $c>0$ with $c\geq -\lambda_2$. 
Then by \eqref{eq:comp} and Lemma \ref{lem:resol} (i),
\begin{equation*}
\begin{split}
&\int_1^t\left(\int_{{\mathbb R}^d}e^{-\lambda_2 s}P_z(|X_{t-s}|>R)\,|\nu|({\rm d}z)\right)\,{\rm d}s\\
&\leq |\nu|({\mathbb R}^d)\int_{1}^te^{-\lambda_2 s}P_0(|X_{t-s}|>R-M)\,{\rm d}s\\
&\leq  |\nu|({\mathbb R}^d)e^{ct}\int_{|y|>R-M}G_c(0,y)\,{\rm d}y
\leq c_4 e^{ct}\int_{|y|>R-M}\frac{{\rm d}y}{|y|^{d+\alpha}}
\leq \frac{c_5 e^{ct}}{R^{\alpha}}.
\end{split}
\end{equation*}
If $\lambda_2=0$, then by \eqref{eq:comp} again,
$$\int_1^t\left(\int_{{\mathbb R}^d}P_z(|X_{t-s}|>R)\,|\nu|({\rm d}z)\right)\,{\rm d}s
\leq c_6 tP_0(|X_t|>R-M).$$
Hence by \eqref{eq:gap}, 
\begin{equation*}
\begin{split}
|{\rm (II)}|
&\leq \int_1^t
\left(\int_{{\mathbb R}^d}|p_s^{\nu}(x,z)-e^{-\lambda s}h(x)h(z)|P_z(|X_{t-s}|>R)\,|\nu|({\rm d}z)\right)\,{\rm d}s\\
&\leq c_7\int_1^t
\left(\int_{{\mathbb R}^d}e^{-\lambda_2 s}P_z(|X_{t-s}|>R)\,|\nu|({\rm d}z)\right)\,{\rm d}s
\leq c_8 I_c(t,R).
\end{split}
\end{equation*}

Since \eqref{eq:scaling} yields
\begin{equation}\label{eq:prob-r}
P_0(|X_s|>R)
=\omega_d\int_{R/{s^{1/\alpha}}}^{\infty}g(r)r^{d-1}\,{\rm d}r
=\omega_d R^d\int_0^{s^{1/\alpha}}g\left(\frac{R}{u}\right)\frac{{\rm d}u}{u^{d+1}}, 
\end{equation}
we have by \eqref{eq:prob-r}  and integration by parts formula,
$$\int_t^{\infty}e^{\lambda s}P_0(|X_s|>R)\,{\rm d}s
=\frac{e^{\lambda t}}{-\lambda}P_0(|X_t|>R)
+\frac{\omega_d}{-\lambda}R^d
\int_{t^{1/\alpha}}^{\infty}e^{\lambda u^{\alpha}}g\left(\frac{R}{u}\right)\frac{{\rm d}u}{u^{d+1}}.$$
We also see by \eqref{eq:comp} that 
\begin{equation*}
\begin{split}
\int_{t-1}^{\infty}e^{\lambda s}P_0(|X_s|>R-M)\,{\rm d}s
&=\int_t^{\infty}e^{\lambda (s-1)}P_0(|X_{s-1}|>R-M)
\,{\rm d}s\\
&\leq e^{-\lambda}\int_t^{\infty}e^{\lambda s}P_0(|X_s|>R-M)\,{\rm d}s. 
\end{split}
\end{equation*}
Therefore, 
\begin{equation*}
\begin{split}
|{\rm (III)}|
&\leq e^{-\lambda t}h(x)\int_{t-1}^{\infty}e^{\lambda s}P_0(|X_s|>R-M)
\,{\rm d}s\left(\int_{{\mathbb R}^d}h(z)\,|\nu|({\rm d}z)\right)\\
&\leq c_9 e^{-\lambda t}h(x)\int_t^{\infty}e^{\lambda s}P_0(|X_s|>R-M)
\,{\rm d}s\\
&\leq c_{10} h(x)\left(P_0(|X_t|>R-M)+J(t,R)\right),
\end{split}
\end{equation*}
which completes the proof.
\qed
\end{proof}

\begin{rem}\rm 
Let us take $R=0$ in \eqref{eq:q-div}. 
Then by \eqref{eq:gap} and \eqref{eq:decomp-0}, 
there exists $c>0$ such that for any $f\in {\cal B}_b({\mathbb R}^d)$ and $t\geq 1$, 
\begin{equation*}
\sup_{x\in {\mathbb R}^d}
\left|e^{\lambda t}E_x\left[e^{A_t^{\nu}}f(X_t)\right]-h(x)\int_{{\mathbb R}^d}f(y)h(y)\,{\rm d}y\right|
\leq c\|f\|_{\infty}e^{\lambda t}\left(t\vee e^{-\lambda_2(\nu)t}\right).
\end{equation*}
The right hand side above goes to $0$ as $t\rightarrow\infty$ 
because $\lambda<\lambda_2(\nu)\leq 0$. 
This result extends the assertion in \cite[Remark 3.4]{NS21+} for the Brownian motion 
to the symmetric stable process, and provides a convergence rate bound in \cite[(1.3)]{T08}.
\end{rem}

Let $R(t)$ be a positive measurable function on $(0,\infty)$ 
such that $R(t)\rightarrow\infty$ as $t\rightarrow\infty$. 
Then by Lemma \ref{lem:ground} (ii), 
\begin{equation}\label{eq:asymp-r}
\eta(t):=e^{-\lambda t}\int_{|y|>R(t)}h(y)\,{\rm d}y
\sim c_*\frac{e^{-\lambda t}}{R(t)^{\alpha}}
\end{equation}
with 
\begin{equation}\label{eq:c-ast}
c_*=c_{\star}\int_{{\mathbb R}^d}h(y)\,\nu({\rm d}y).
\end{equation}
The next lemma reveals the exact asymptotic behavior 
of the Feynman-Kac semigroup conditioned that 
the particle at time $t$ sits outside the ball with radius $R(t)$.

\begin{lem}\label{lem:fk-asymp}
Let $K$ be a compact set in ${\mathbb R}^d$. 
If $R(t)/t^{1/\alpha}\rightarrow \infty$ as $t\rightarrow\infty$, 
then there exist positive constants $c_1$, $c_2$ and $T$ such that 
for any $x\in K$, $t\geq T$ and $s\in [0,t-1]$,
$$E_x\left[e^{A_{t-s}^{\nu}};|X_{t-s}|>R(t)\right]
=e^{\lambda s}h(x)\eta(t)(1+\theta_{s,x}(t))
$$
with 
$$|\theta_{s,x}(t)|\leq c_1e^{-c_2(t-s)}.$$
Here $c_1$ and $c_2$ can be independent of the choice of the function $R(t)$. 
In particular, 
$$\lim_{t\rightarrow\infty}
\sup_{x\in K}\left|\frac{1}{h(x)\eta(t)}E_x\left[e^{A_t^{\nu}};|X_t|>R(t)\right]-1\right|
= 0.$$
\end{lem}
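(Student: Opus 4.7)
The plan is to apply the decomposition \eqref{eq:decomp} with $t$ replaced by $t-s$ and $R$ by $R(t)$, and then show that the two error terms are dominated by the leading main term uniformly for $x\in K$. Since $e^{-\lambda(t-s)} = e^{\lambda s}e^{-\lambda t}$, the middle term in \eqref{eq:decomp} becomes exactly $e^{\lambda s}h(x)\eta(t)$, giving the rewrite
$$E_x[e^{A_{t-s}^{\nu}};|X_{t-s}|>R(t)] = e^{\lambda s}h(x)\eta(t) + P_x(|X_{t-s}|>R(t)) + \int_{|y|>R(t)} q_{t-s}(x,y)\,{\rm d}y.$$
By \eqref{eq:asymp-r}, the leading term behaves like $c_* h(x)e^{-\lambda(t-s)}/R(t)^\alpha$, and since $h$ is bounded below by a positive constant on the compact set $K$, the task is to bound each remaining piece by a constant multiple of $e^{-c_2(t-s)}\cdot e^{-\lambda(t-s)}/R(t)^\alpha$.

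For the probability term, the spatial uniformity bound \eqref{eq:comp}, combined with \eqref{eq:prob-r} and the tail asymptotic \eqref{eq:bg-asymp} for $g$, gives $P_0(|X_{t-s}|>R(t)-|x|) \leq C(t-s)/R(t)^\alpha$ once $R(t)$ is large enough compared to $\sup_K|x|$. Dividing by $c_* h(x)e^{-\lambda(t-s)}/R(t)^\alpha$ produces a factor $(t-s)e^{\lambda(t-s)}$, which decays exponentially in $t-s$ because $\lambda<0$. For the $q$-integral I invoke Proposition \ref{prop:q} and fix any $c$ with $-\lambda_2<c<-\lambda$ (possible because $\lambda<\lambda_2$). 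The first summand $h(x)P_0(|X_{t-s}|>R(t)-M)$ is handled as above. The term $I_c(t-s,R(t))$ is bounded by $e^{c(t-s)}/R(t)^\alpha$ when $\lambda_2<0$, yielding a ratio $e^{(c+\lambda)(t-s)}$ with $c+\lambda<0$; when $\lambda_2=0$, the alternative form $(t-s)P_0(|X_{t-s}|>R(t)-M)$ gives a ratio of order $(t-s)^2 e^{\lambda(t-s)}$.

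The main obstacle is controlling $J(t-s,R(t))$, where the polynomial tail of the stable density interacts with the factor $e^{\lambda u^\alpha}$. My plan is to split the defining integral at $u=R(t)-M$, which is possible because $R(t)/(t-s)^{1/\alpha}\to\infty$ ensures $(t-s)^{1/\alpha}<R(t)-M$ for large $t$. On $[(t-s)^{1/\alpha}, R(t)-M]$ the ratio $(R(t)-M)/u$ is at least $1$, so \eqref{eq:bg-asymp} gives $g((R(t)-M)/u) \leq C u^{d+\alpha}/(R(t)-M)^{d+\alpha}$; after the substitution $w=u^\alpha$ the exponential $e^{\lambda w}$ integrates to a constant multiple of $e^{\lambda(t-s)}$. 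On $(R(t)-M,\infty)$ we bound $g$ by $g(0)$ and use $e^{\lambda u^\alpha}\leq e^{\lambda(R(t)-M)^\alpha}$, which is super-exponentially small. The net result is
$$J(t-s,R(t)) \leq \frac{C}{(R(t)-M)^\alpha} + \text{super-exponentially small in }t,$$
so that after dividing by $c_* h(x)e^{-\lambda(t-s)}/R(t)^\alpha$ we again obtain a factor proportional to $e^{\lambda(t-s)}$.

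Assembling the three bounds yields $|\theta_{s,x}(t)| \leq c_1 e^{-c_2(t-s)}$ with $c_2 = \min(|\lambda|, -(c+\lambda))>0$ and $c_1$ depending only on $K$, $c$, $M$, and $\nu$, hence independent of the choice of $R(\cdot)$ subject to the growth hypothesis. The final uniform statement follows by taking $s=0$, so that $\theta_{0,x}(t)\to 0$ uniformly on $K$, which is exactly the convergence $\sup_{x\in K}|h(x)^{-1}\eta(t)^{-1}E_x[e^{A_t^\nu};|X_t|>R(t)]-1|\to 0$.
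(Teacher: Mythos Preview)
Your proof is correct and follows essentially the same route as the paper's: both start from the decomposition \eqref{eq:decomp}, invoke Proposition~\ref{prop:q}, and bound each of the resulting pieces by a multiple of $e^{-c_2(t-s)}\,e^{-\lambda(t-s)}/R(t)^{\alpha}$, relying on \eqref{eq:bg-asymp} for the probability term and on a split of the $J$-integral for the remaining one. The differences are cosmetic---you split the $J$-integral at $u=R(t)-M$ rather than $u=R(t)$, and in the case $\lambda_2=0$ you use the branch $(t-s)P_0(|X_{t-s}|>R(t)-M)$ of $I_c$ instead of the uniform bound $e^{c(t-s)}/R(t)^{\alpha}$; both choices lead to the same exponential control. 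Note also that you correctly work with $J(t-s,R(t))$, whereas the paper writes $J(t,R(t))$ in \eqref{eq:j-bound}; the same estimate applies in either case.
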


\begin{proof}
Take $M>0$ so that $B(M)$ includes both $K$ and the support of $|\nu|$. 
Then for any $s\in [0,t-1]$,
$$(R(t)-M)/(t-s)^{1/\alpha}\geq (R(t)-M)/t^{1/\alpha}$$
and the right hand above goes to $\infty$ as $t\rightarrow\infty$. 
Hence by \eqref{eq:scaling} and \eqref{eq:bg-asymp}, 
there exist $c_1>0$, $c_2>0$ and  $T_1>1$ such that 
for any $x\in K$ and $t\geq T_1$ and $s\in [0,t-1]$, 
\begin{equation}\label{eq:bound-prob}
\begin{split}
&P_x(|X_{t-s}|>R(t))
\leq P_0(|X_{t-s}|>R(t)-M)
=\omega_d\int_{\frac{R(t)-M}{(t-s)^{1/\alpha}}}^{\infty}g(u)u^{d-1}\,{\rm d}u\\
&\leq c_1\int_{\frac{R(t)-M}{(t-s)^{1/\alpha}}}^{\infty}\frac{{\rm d}u}{u^{\alpha+1}}
\leq c_2\frac{t-s}{R(t)^{\alpha}}
=c_2e^{\lambda(t-s)}(t-s)\frac{e^{-\lambda(t-s)}}{R(t)^{\alpha}}.
\end{split}
\end{equation}

For any $c>0$,  
\begin{equation}\label{eq:i-bound}
I_c(t-s,R(t))\leq \frac{e^{c(t-s)}}{R(t)^{\alpha}}
=e^{(c+\lambda)(t-s)}\frac{e^{-\lambda(t-s)}}{R(t)^{\alpha}}.
\end{equation}
By \eqref{eq:bg-asymp}, there exists $T_2>1$ such that for all $t\geq T_2$,
$$
\int_{t^{1/\alpha}}^{R(t)}e^{\lambda u^{\alpha}}g\left(\frac{R(t)-M}{u}\right)\frac{{\rm d}u}{u^{d+1}}
\leq \frac{c_3}{R(t)^{d+\alpha}}
\int_{t^{1/\alpha}}^{\infty}e^{\lambda u^{\alpha}}u^{\alpha-1}\,{\rm d}u
\leq \frac{c_4e^{\lambda t}}{R(t)^{d+\alpha}}
$$
and
$$
\int_{R(t)}^{\infty}e^{\lambda u^{\alpha}}g\left(\frac{R(t)-M}{u}\right)\frac{{\rm d}u}{u^{d+1}}
\leq c_5\int_{R(t)}^{\infty}e^{\lambda u^{\alpha}}\frac{{\rm d}u}{u^{d+1}}
\leq \frac{c_6e^{\lambda R(t)^{\alpha}}}{R(t)^{d+\alpha}}
\leq \frac{c_7e^{\lambda t}}{R(t)^{d+\alpha}}.
$$
Hence
\begin{equation}\label{eq:j-bound}
\begin{split}
J(t,R(t))
&=e^{-\lambda t}(R(t)-M)^d
\int_{t^{1/\alpha}}^{\infty}e^{\lambda u^{\alpha}}g\left(\frac{R(t)-M}{u}\right)\frac{{\rm d}u}{u^{d+1}}\\
&\leq \frac{c_8}{R(t)^{\alpha}}
=c_8e^{\lambda(t-s)}\frac{e^{-\lambda(t-s)}}{R(t)^\alpha}.
\end{split}
\end{equation}
Note that all the constants $c_i$ can 
be independent of the choice of the function $R(t)$. 

Fix $c\in (-\lambda_2(\nu),-\lambda)$. 
Then by combining \eqref{eq:decomp} and Proposition \ref{prop:q} with 
\eqref{eq:bound-prob}--\eqref{eq:j-bound},  
there exist positive constants $c_{9}$, $c_{10}$ and $c_{11}$, and $T\geq 1$ 
such that for any $x\in K$, $t\geq T$ and $s\in [0,t-1]$,
\begin{equation*}
\begin{split}
&\left|E_x\left[e^{A_{t-s}^{\nu}};|X_{t-s}|>R(t)\right]-e^{\lambda s}\eta(t)h(x)\right|\\
&\leq P_x(|X_{t-s}|>R(t))+\left|\int_{|y|>R(t)}q_{t-s}(x,y)\,{\rm d}y\right|\\
&\leq \frac{c_{9}e^{-\lambda (t-s)}}{R(t)^{\alpha}}\left(e^{\lambda(t-s)}(t-s)
+e^{(c+\lambda)(t-s)}+e^{\lambda(t-s)}\right)
\leq c_{10}e^{\lambda s}e^{-c_{11}(t-s)}\frac{e^{-\lambda t}}{R(t)^{\alpha}}.
\end{split}
\end{equation*}
Then by \eqref{eq:asymp-r}, the proof is complete.
\qed
\end{proof}

Recall that by \cite[Lemma 3.4]{S08}, 
we have for any $\mu\in {\cal K}_{\infty}(1)$, 
\begin{equation}\label{eq:gauge}
\sup_{x\in {\mathbb R}^d}E_x\left[\int_0^{\infty}e^{2\lambda s+A_s^{\nu}}\,{\rm d}A_s^{\mu}\right]<\infty.
\end{equation}
 The next two lemmas will be used later for the second moment estimates 
of the expected population for a branching symmetric stable process.
\begin{lem}\label{lem:second}
Let $K$ be a compact set in ${\mathbb R}^d$ 
and $\mu$ a Kato class measure with compact support in ${\mathbb R}^d$. 
If $R(t)/t^{1/\alpha}\rightarrow \infty$ as $t\rightarrow\infty$, 
then there exist $C>0$ and $T>0$ such that 
for any $t\geq T$,
$$\sup_{x\in K}E_x\left[\int_0^t e^{A_s^{\nu}}
E_{X_s}\left[e^{A_{t-s}^{\nu}};|X_{t-s}|>R(t)\right]^2\,{\rm d}A_s^{\mu}\right]
\leq C\eta(t)^2.
$$
\end{lem}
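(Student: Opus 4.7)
The plan is to split $\int_0^t = \int_0^{t-1}+\int_{t-1}^t$ and handle each piece separately, using Lemma~\ref{lem:fk-asymp} on the long-time portion and a short-time direct estimate on the remaining interval. Throughout, I exploit that the integrator ${\rm d}A_s^\mu$ is concentrated on $\{X_s\in\supp(\mu)\}$ via the Revuz correspondence, so $X_s$ may be treated as lying in the compact set $K_\mu:=\supp(\mu)$ on the range of integration.

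For the piece over $[0,t-1]$, Lemma~\ref{lem:fk-asymp} applied with $K=K_\mu$ gives
$$E_{X_s}[e^{A_{t-s}^\nu};|X_{t-s}|>R(t)]\leq C e^{\lambda s}h(X_s)\eta(t)\leq C'e^{\lambda s}\eta(t)$$
since $h$ is bounded. Squaring and pulling the deterministic factors outside the integral reduces the estimate to
$$C''\eta(t)^2\cdot E_x\!\left[\int_0^{t-1}e^{A_s^\nu+2\lambda s}\,{\rm d}A_s^\mu\right],$$
which is bounded by a constant multiple of $\eta(t)^2$ thanks to \eqref{eq:gauge} (the Kato class measure $\mu$ with compact support belongs to ${\cal K}_\infty(1)$).

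On $[t-1,t]$ the remaining time $\tau:=t-s\in[0,1]$ is small and Lemma~\ref{lem:fk-asymp} is not available. The auxiliary bound I would establish is
$$\sup_{x\in K'}E_x[e^{A_\tau^\nu};|X_\tau|>R(t)]\leq C/R(t)^\alpha\qquad(\tau\in[0,1],\ t\text{ large})$$
for any compact $K'\supseteq\supp(\mu)\cup\supp(|\nu|)$. To prove this, I dominate $e^{A_\tau^\nu}\leq e^{A_\tau^{\nu^+}}$ and apply the elementary identity $e^{A_\tau^{\nu^+}}-1=\int_0^\tau e^{A_u^{\nu^+}}\,{\rm d}A_u^{\nu^+}$; the Markov property at time $u$ converts the indicator into $P_{X_u}(|X_{\tau-u}|>R(t))$, and since $X_u\in\supp(\nu^+)$ on the integrator and the scaling estimate used in \eqref{eq:bound-prob} yields $P_{X_u}(|X_{\tau-u}|>R(t))\leq c(\tau-u)/R(t)^\alpha$, the remaining factor $E_x[e^{A_\tau^{\nu^+}}]$ is uniformly bounded for $\tau\leq 1$ by Khasminskii's estimate. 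Squaring produces $O(1/R(t)^{2\alpha})$, and combining with
$$E_x\!\left[\int_{t-1}^t e^{A_s^\nu}\,{\rm d}A_s^\mu\right]\leq e^{-2\lambda t}E_x\!\left[\int_0^\infty e^{A_s^\nu+2\lambda s}\,{\rm d}A_s^\mu\right]\leq C e^{-2\lambda t}$$
(using $e^{-2\lambda s}\leq e^{-2\lambda t}$ on $[t-1,t]$) bounds the second piece by $C e^{-2\lambda t}/R(t)^{2\alpha}$, which is $\asymp\eta(t)^2$ by \eqref{eq:asymp-r}.

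The main obstacle is the short-time range $[t-1,t]$, where a naive Cauchy--Schwarz on $E_x[e^{A_\tau^\nu};|X_\tau|>R(t)]$ would only give $O(1/R(t)^{\alpha/2})$, producing a second-piece bound of order $e^{-2\lambda t}/R(t)^\alpha$---too large compared with $\eta(t)^2\sim e^{-2\lambda t}/R(t)^{2\alpha}$. The first-order Feynman--Kac expansion above is what captures the sharper heuristic that the tail event requires a single large jump of probability $\sim\tau/R(t)^\alpha$, with the Feynman--Kac weight contributing only an $O(1)$ factor over a bounded time interval.
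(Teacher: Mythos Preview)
Your proof is correct and follows the same overall architecture as the paper: split $\int_0^t=\int_0^{t-1}+\int_{t-1}^t$, control the first piece via Lemma~\ref{lem:fk-asymp} and the gauge bound \eqref{eq:gauge}, and control the second via a short-time bound $E_z[e^{A_\tau^\nu};|X_\tau|>R(t)]\leq C/R(t)^\alpha$ for $\tau\in[0,1]$ combined with $E_x[\int_{t-1}^t e^{A_s^\nu}\,{\rm d}A_s^\mu]\leq Ce^{-2\lambda t}$ and \eqref{eq:asymp-r}.

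The only noteworthy difference lies in how the short-time bound is obtained. The paper invokes a heat-kernel comparison from \cite[Theorem~6.1~(i)]{ABM91} to get $E_x[e^{A_\tau^\nu};|X_\tau|>R]\leq c\,P_0(|X_\tau|>R-M)$ directly, whereas you derive the same estimate by the first-order expansion $e^{A_\tau^{\nu^+}}-1=\int_0^\tau e^{A_u^{\nu^+}}\,{\rm d}A_u^{\nu^+}$, Markov at time $u$, and Khasminskii's lemma. Your route is more self-contained (no external kernel bound needed) and makes transparent the intuition you state---that the tail event costs $O(\tau/R(t)^\alpha)$ while the exponential weight is $O(1)$ on $[0,1]$; the paper's route is shorter once the cited estimate is in hand. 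Both arrive at exactly the same inequality \eqref{eq:large-time}, and the remainder of the argument is identical.
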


\begin{proof} 
Fix $x\in K$. For $t\geq 1$,
\begin{equation}\label{eq:second-1}
\begin{split}
&E_x\left[\int_0^t e^{A_s^{\nu}}E_{X_s}\left[e^{A_{t-s}^{\nu}};|X_{t-s}|>R(t)\right]^2
\,{\rm d}A_s^{\mu}\right]\\
&=E_x\left[\int_0^{t-1}e^{A_s^{\nu}}E_{X_s}\left[e^{A_{t-s}^{\nu}};|X_{t-s}|>R(t)\right]^2
\,{\rm d}A_s^{\mu}\right]\\
&+E_x\left[\int_{t-1}^t e^{A_s^{\nu}}E_{X_s}\left[e^{A_{t-s}^{\nu}};|X_{t-s}|>R(t)\right]^2
\,{\rm d}A_s^{\mu}\right]
={\rm (IV)}+{\rm (V)}.
\end{split}
\end{equation}
If $0\leq s\leq t-1$, then Lemma \ref{lem:fk-asymp} yields for any $z\in \supp[\mu]$
$$
E_z\left[e^{A_{t-s}^{\nu}};|X_{t-s}|>R(t)\right]
\leq c_1e^{\lambda s} \eta(t)
$$
so that  by \eqref{eq:gauge},
\begin{equation}\label{eq:iv}
{\rm (IV)}
\leq c_2 \eta(t)^2 \sup_{x\in {\mathbb R}^d}E_x\left[\int_0^{\infty}e^{2\lambda s+A_s^{\nu}}\,{\rm d}A_s^{\mu}\right]
\leq c_3 \eta(t)^2.
\end{equation}

By \cite[Theorem 6.1 (i)]{ABM91} and \eqref{eq:comp}, 
there exists $c_4>0$ such that for any $M>0$, $R>M$, $t\in [0,1]$ and $x\in {\mathbb R}^d$ with $|x|\leq M$, 
$$E_x\left[e^{A_t^{\nu}};|X_t|>R\right]
\leq c_4 P_0(|X_t|>R-M)\leq c_4 P_0(|X_1|>R-M).$$
Hence \eqref{eq:bg-asymp} implies that 
for any $z\in \supp[\mu]$,  all sufficiently large $t\geq 1$ and  any $s\in [t-1,t]$, 
\begin{equation}\label{eq:large-time}
E_z\left[e^{A_{t-s}^{\nu}};|X_{t-s}|>R(t)\right]
\leq c_5 P_0(|X_1|>R(t)-M)
\leq \frac{c_6}{R(t)^{\alpha}}.
\end{equation}
Since \eqref{eq:gauge} yields 
\begin{equation*}
E_x\left[\int_{t-1}^t e^{A_s^{\nu}}\,{\rm d}A_s^{\mu}\right]
\leq e^{-2\lambda t}
\sup_{x\in {\mathbb R}^d}E_x\left[\int_0^{\infty}e^{2\lambda s+A_s^{\nu}}\,{\rm d}A_s^{\mu}\right]
\leq c_7e^{-2\lambda t},
\end{equation*}
we have by \eqref{eq:asymp-r},
$$
{\rm (V)}\leq 
\frac{c_8}{R(t)^{2\alpha}}E_x\left[\int_{t-1}^t e^{A_s^{\nu}}\,{\rm d}A_s^{\mu}\right]
\leq \frac{c_{9}e^{-2\lambda t}}{R(t)^{2\alpha}}
\leq c_{10}\eta(t)^2.
$$
Combining this with \eqref{eq:second-1} and \eqref{eq:iv}, 
we complete the proof. 
\qed
\end{proof}

For $\kappa>0$, let $R^{\kappa}(t)=(e^{-\lambda t}\kappa)^{1/\alpha}$. 
Letting $R(t)=R^{\kappa}(t)$ in \eqref{eq:asymp-r}, 
we get 
\begin{equation}\label{eq:asymp-r1}
\eta(t)\rightarrow c_*\kappa^{-1} \ (t\rightarrow\infty).
\end{equation}

\begin{lem}\label{lem:second-1}
Let $K\subset {\mathbb R}^d$ be a compact set. 
\begin{enumerate}
\item[{\rm (i)}] For any $\kappa>0$,
$$\lim_{t\rightarrow\infty}\sup_{x\in K}\left|\frac{\kappa}{h(x)}E_x\left[e^{A_t^{\nu}};|X_t|>R^{\kappa}(t)\right]-c_*\right|
=0.$$
\item[{\rm (ii)}] 
Let $\mu$ be a Kato class measure with compact support in ${\mathbb R}^d$. 
Then 
$$\lim_{\kappa\rightarrow\infty}\limsup_{t\rightarrow\infty}\sup_{x\in K}
\kappa E_x\left[\int_0^t e^{A_s^{\nu}}
E_{X_s}\left[e^{A_{t-s}^{\nu}};|X_{t-s}|>R^{\kappa}(t)\right]^2\,{\rm d}A_s^{\mu}\right]=0.$$
\end{enumerate}
\end{lem}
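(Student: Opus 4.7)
The strategy is to read both parts off from Lemmas~\ref{lem:fk-asymp} and \ref{lem:second}, combined with the asymptotic \eqref{eq:asymp-r1}. The key preliminary observation is that the choice $R^{\kappa}(t)=(e^{-\lambda t}\kappa)^{1/\alpha}$ satisfies $R^{\kappa}(t)/t^{1/\alpha}=(\kappa e^{-\lambda t}/t)^{1/\alpha}\to\infty$ as $t\to\infty$ since $-\lambda>0$, so both lemmas apply with $R(t)=R^{\kappa}(t)$ for every fixed $\kappa>0$.

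For (i), I plan to apply Lemma~\ref{lem:fk-asymp} with $s=0$, which gives
$$E_x\!\left[e^{A_t^{\nu}};|X_t|>R^{\kappa}(t)\right]=h(x)\eta(t)(1+\theta_{0,x}(t))$$
with $|\theta_{0,x}(t)|\le c_1 e^{-c_2 t}\to 0$ uniformly in $x\in K$. Multiplying through by $\kappa/h(x)$ and invoking \eqref{eq:asymp-r1}, which yields $\eta(t)\to c_*/\kappa$, produces the uniform convergence to $c_*$ at once.

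For (ii), I plan to apply Lemma~\ref{lem:second} with $R=R^{\kappa}$ to obtain constants $C$ and $T$ such that
$$\sup_{x\in K}E_x\!\left[\int_0^t e^{A_s^{\nu}}E_{X_s}\!\left[e^{A_{t-s}^{\nu}};|X_{t-s}|>R^{\kappa}(t)\right]^2\,{\rm d}A_s^{\mu}\right]\le C\eta(t)^2$$
for all $t\ge T$. Since $\eta(t)^2\to(c_*/\kappa)^2$ by \eqref{eq:asymp-r1}, taking the limsup in $t$, multiplying by $\kappa$, and then letting $\kappa\to\infty$ yields an upper bound $Cc_*^2/\kappa\to 0$, which is the desired conclusion.

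The only point requiring scrutiny is the uniformity in $\kappa$ of the constant $C$ produced by Lemma~\ref{lem:second}. Although that lemma is phrased for a single fixed admissible function $R(t)$, inspection of its proof shows that the constants entering the estimates of ${\rm (IV)}$ and ${\rm (V)}$ all come from $R(t)$-free ingredients: the constants $c_1,c_2$ of Lemma~\ref{lem:fk-asymp} (which are explicitly independent of the function $R(t)$), the gauge bound \eqref{eq:gauge}, the stable-process tail asymptotic \eqref{eq:bg-asymp}, and the resolvent comparison \eqref{eq:res-comp}. Hence $C$ depends only on $K$, $\mu$, and $\nu$, uniformly in $\kappa$. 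This uniformity is the only subtle step; granted it, the rest of the argument is a direct computation.
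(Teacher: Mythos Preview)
Your proof is correct and follows essentially the same route as the paper. For (i) the argument is identical. For (ii) the paper does not invoke Lemma~\ref{lem:second} as a black box but instead reproves its key ingredient directly: it combines Lemma~\ref{lem:fk-asymp} (for $s\in[0,t-1]$) with \eqref{eq:large-time} (for $s\in[t-1,t]$) to get the single uniform bound $E_z[e^{A_{t-s}^{\nu}};|X_{t-s}|>R^{\kappa}(t)]\le c_1 e^{\lambda s}\eta(t)$ for all $s\in[0,t]$ with $c_1$ independent of $\kappa$, and then applies \eqref{eq:gauge}. Your inspection of the proof of Lemma~\ref{lem:second} amounts to exactly the same observation, so the difference is purely one of packaging.
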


\begin{proof} 
(i) follows by Lemma \ref{lem:fk-asymp} and \eqref{eq:asymp-r1}.
We now show (ii). 
By Lemma \ref{lem:fk-asymp} and \eqref{eq:large-time}, 
there exist $c_1>0$ and $T=T(\kappa)>1$ for any $\kappa>0$   
such that, for any $z\in \supp[\mu]$, $t\geq T$ and $s\in [0,t]$,
$$
E_z\left[e^{A_{t-s}^{\nu}};|X_{t-s}|>R^{\kappa}(t)\right]
\leq c_1e^{\lambda s} \eta(t).
$$
Hence by \eqref{eq:gauge}, 
there exists $c_2>0$ such that for all $t\geq  T$,
$$\sup_{x\in K}\kappa E_x\left[\int_0^t e^{A_s^{\nu}}
E_{X_s}\left[e^{A_{t-s}^{\nu}};|X_{t-s}|>R^{\kappa}(t)\right]^2\,{\rm d}A_s^{\mu}\right]
\leq \frac{c_2}{\kappa}(\kappa\eta(t))^2.
$$
Then by \eqref{eq:asymp-r1},
$$\limsup_{t\rightarrow\infty}\sup_{x\in K}\kappa E_x\left[\int_0^t e^{A_s^{\nu}}
E_{X_s}\left[e^{A_{t-s}^{\nu}};|X_{t-s}|>R^{\kappa}(t)\right]^2\,{\rm d}A_s^{\mu}\right]\leq \frac{c_2c_*^2}{\kappa}.
$$
The right hand side above goes to 0 as $\kappa\rightarrow \infty$. 
\qed
\end{proof}

\section{Maximal displacement of branching symmetric stable processes}
\label{sect:branching}

In this section, we first introduce a model of branching symmetric stable processes. 
We then present our main results with examples. 

\subsection{Branching symmetric stable processes}

For $\alpha\in (0,2)$, let 
${\mathbf M}=(\Omega,{\cal F}, \{X_t\}_{t\geq 0},\{P_x\}_{x\in {\mathbb R}^d}, \{{\cal F}_t\}_{t\geq 0})$ 
be a symmetric $\alpha$-stable process on ${\mathbb R}^d$, 
where $\{{\cal F}_t\}_{t\geq 0}$ is the minimal augmented admissible filtration. 

Let us formulate the model of a branching symmetric $\alpha$-stable process on ${\mathbb R}^d$ 
by following \cite{S08} and references therein. 
We first define the set ${\mathbf X}$ as follows: 
let $({\mathbb R}^d)^{(0)}=\{\Delta\}$ and $({\mathbb R}^d)^{(1)}={\mathbb R}^d$. 
Let $n\geq 2$. 
For ${\bf x}=(x^1,\dots, x^n)$ and ${\bf y}=(y^1, \dots, y^n)$ in $({\mathbb R}^d)^n$, 
we write ${\bf x}\sim {\bf y}$ 
if there exists a permutation $\sigma$ of $\{1,2,\dots, n\}$ such that 
$y^i=x^{\sigma(i)}$ for any $i=1,\dots, n$. 
Using this equivalence relation, 
we define $({\mathbb R}^d)^{(n)}=({\mathbb R}^d)^{n}/\sim$ for $n\geq 2$ 
and ${\mathbf X}=\cup_{n=0}^{\infty}({\mathbb R}^d)^{(n)}$. 

Let ${\mathbf p}=\{p_n(x)\}_{n=0}^{\infty}$ be a probability function on ${\mathbb R}^d$, 
$0\leq p_n(x)\leq 1$ and $\sum_{n=0}^{\infty}p_n(x)=1$ for any $x\in {\mathbb R^d}$. 
We assume that $p_0(x)+p_1(x)\not\equiv 1$ to avoid the triviality. 
Fix $\mu\in {\cal K}$ and ${\mathbf p}$. 
We next introduce a particle system as follows: 
a particle starts from $x\in {\mathbb R}^d$ at time $t=0$ 
and moves by following the distribution $P_x$ 
until the random time $U$. 
Here the distribution of $U$ is given by  
$$P_x(U>t \mid {\cal F}_{\infty})=e^{-A_t^{\mu}} \ (t>0).$$
At time $U$, this particle dies leaving no offspring with probability $p_0(X_{U-})$, 
or splits into $n$ particles with probability $p_n(X_{U-})$ for $n\geq 1$.  
For the latter case, these $n$ particles then move 
by following the distribution $P_{X_{U-}}$ and repeat the same procedure 
independently. 
If there exist $n$ particles alive at time $t$,  
then the positions of these particles 
determine a point in $({\mathbb R}^d)^{(n)}$. 
Let ${\mathbf X}_t$ denote such a point, 
$${\mathbf X}_t=({\mathbf X}_t^{(1)},\dots,{\mathbf X}_t^{(n)})\in ({\mathbb R}^d)^{(n)}.$$
In this way,  we can define the model of
a branching symmetric $\alpha$-stable process 
$\overline{\mathbf M}=(\{{\mathbf X}_t\}_{t\geq 0}, \{{\mathbf P}_{{\mathbf x}}\}_{{\mathbf x}\in {\mathbf X}})$ 
on ${\bf X}$ (or simply on ${\mathbb R}^d$) 
with branching rate $\mu$ and branching mechanism ${\mathbf p}$.
Note that for $x\in {\mathbb R}^d$, ${\mathbf P}_x$ denotes 
the law of the process such that the initial state is a single particle at $x$. 

Let $S$ be the first splitting time of $\overline{\mathbf M}$ given by 
$${\mathbf P}_x(S>t \mid\sigma(X))=P_x(U>t \mid {\cal F}_{\infty})=e^{-A_t^{\mu}} \ (t>0).$$
Let $Z_t$ be the population at time $t$ and $e_0:=\inf\{t>0 \mid Z_t=0\}$ 
the extinction time of $\overline{{\mathbf M}}$. 
Note that $Z_t=0$ for all $t\geq e_0$. 
For $f\in {\cal B}_b({\mathbb R}^d)$, we define 
$$Z_t(f)=
\begin{cases}
\sum_{k=1}^{Z_t} f({\mathbf X}_t^{(k)}) & (t<e_0),\\ 
0 & (t\geq e_0).
\end{cases}$$
For $A\in {\cal B}({\mathbb R}^d)$,  
let $Z_t(A):=Z_t({\bf 1}_A)$ denote the population on $A$ at time $t$.

Let $Q(x)=\sum_{n=0}^{\infty}np_n(x)$ and $\nu_Q({\rm d}x)=Q(x)\mu({\rm d}x)$. 
Let $R(x)=\sum_{n=1}^{\infty}n(n-1)p_n(x)$ and 
$\nu_R({\rm d}x)=R(x)\mu({\rm d}x)$. 
We here recall the next lemma on the first and second moments of $Z_t(f)$:
\begin{lem}\label{lem:moment}{\rm (\cite[Lemma 2.2]{NS21+} and \cite[Lemma 3.3]{S08})}
Let $\mu\in {\cal K}$ and $f\in {\cal B}_b({\mathbb R}^d)$.
\begin{enumerate}
\item[{\rm (i)}] 
If $\nu_Q\in {\cal K}$, then 
$${\mathbf E}_x\left[Z_t(f)\right]=E_x\left[e^{A_t^{(Q-1)\mu}}f(X_t)\right].$$
\item[{\rm (ii)}] 
If $\nu_R\in {\cal K}$, then 
\begin{equation*}
\begin{split}
{\mathbf E}_x\left[Z_t(f)^2\right]
&=E_x\left[e^{A_t^{(Q-1)\mu}}f(X_t)^2\right]\\
&+E_x\left[\int_0^t e^{A_s^{(Q-1)\mu}}E_{X_s}\left[e^{A_{t-s}^{(Q-1)\mu}}f(X_{t-s})\right]^2
\,{\rm d}A_s^{\nu_R}\right].
\end{split}
\end{equation*}
\end{enumerate}
\end{lem}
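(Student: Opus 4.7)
The plan is to derive Duhamel-type integral equations for the two moments by conditioning on the first splitting time $S$, and then verify that the claimed Feynman--Kac expressions are the unique solutions.

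For \textbf{(i)}, set $u(t,x):={\mathbf E}_x[Z_t(f)]$. Using the conditional law $P_x(S\in\,{\rm d}s\mid \sigma(X))=e^{-A_s^\mu}\,{\rm d}A_s^\mu$ on $(0,t]$ together with the branching property (at time $S$ the particle splits into $n$ offspring with probability $p_n(X_S)$, after which each offspring independently restarts a branching process of length $t-S$), I obtain
$$u(t,x)=E_x\left[e^{-A_t^\mu}f(X_t)\right]+E_x\left[\int_0^t e^{-A_s^\mu}Q(X_s)\,u(t-s,X_s)\,{\rm d}A_s^\mu\right].$$
Next I verify that $v(t,x):=E_x[e^{A_t^{(Q-1)\mu}}f(X_t)]$ satisfies the same equation via the pathwise Duhamel identity
$$e^{A_t^{(Q-1)\mu}}=e^{-A_t^\mu}+\int_0^t e^{-A_s^\mu}Q(X_s)\,e^{A_{t-s}^{(Q-1)\mu}}\circ\theta_s\,{\rm d}A_s^\mu,$$
which follows from the additivity of continuous additive functionals (rewriting $e^{A_{t-s}^{(Q-1)\mu}}\circ\theta_s=e^{A_t^{(Q-1)\mu}}/e^{A_s^{(Q-1)\mu}}$), the elementary identity $\int_0^t e^{-A_s^{Q\mu}}\,{\rm d}A_s^{Q\mu}=1-e^{-A_t^{Q\mu}}$, and the factorization $e^{A_t^{(Q-1)\mu}}e^{-A_t^{Q\mu}}=e^{-A_t^\mu}$. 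After multiplying by $f(X_t)$, taking $E_x$ and applying the Markov property at time $s$, $v$ satisfies the same Duhamel equation as $u$; uniqueness by Picard iteration, using that $\nu_Q\in{\cal K}$ controls $\sup_x E_x[e^{A_t^{Q\mu}}]$ via Khasminskii's lemma, forces $u=v$.

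For \textbf{(ii)}, set $w(t,x):={\mathbf E}_x[Z_t(f)^2]$. Conditioning on $S$ once more and using that distinct offspring evolve as independent branching subtrees, on $\{S=s\leq t\}$ with $n$ offspring the conditional second moment equals $n\,w(t-s,X_s)+n(n-1)\,u(t-s,X_s)^2$. Averaging over $n$ produces the source term $Q(X_s)w(t-s,X_s)+R(X_s)v(t-s,X_s)^2$, and hence
$$w(t,x)=E_x[e^{-A_t^\mu}f(X_t)^2]+E_x\left[\int_0^t e^{-A_s^\mu}\bigl(Q(X_s)w(t-s,X_s)+R(X_s)v(t-s,X_s)^2\bigr)\,{\rm d}A_s^\mu\right].$$
I then check that the proposed formula solves this equation by applying the Duhamel identity above separately to $e^{A_t^{(Q-1)\mu}}f(X_t)^2$ and to $\int_0^t e^{A_s^{(Q-1)\mu}}R(X_s)v(t-s,X_s)^2\,{\rm d}A_s^\mu$; the second computation also requires a Fubini interchange followed by the Markov property at the outer integration time. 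Uniqueness as in (i), now under the additional hypothesis $\nu_R\in{\cal K}$, concludes the argument.

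The main obstacle is the careful handling of the pathwise Feynman--Kac Duhamel identity under a CAF driven by a possibly singular Kato class measure, together with the Fubini step and Markov application in (ii); the Kato hypotheses $\nu_Q,\nu_R\in{\cal K}$ are precisely what is needed to guarantee finiteness of all the integrals involved and to run Picard iteration to uniqueness.
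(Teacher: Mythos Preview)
The paper does not give its own proof of this lemma; it simply recalls the statement from \cite[Lemma~2.2]{NS21+} and \cite[Lemma~3.3]{S08}. Your sketch is essentially the standard argument found in those references: derive a renewal (Duhamel) equation for the first and second moments by conditioning on the first splitting time, check that the Feynman--Kac expressions satisfy the same equations via the pathwise identity for $e^{A_t^{(Q-1)\mu}}$, and conclude by uniqueness. The algebraic verification you outline for the Duhamel identity is correct (it reduces to $\int_0^t e^{-A_s^{Q\mu}}\,{\rm d}A_s^{Q\mu}=1-e^{-A_t^{Q\mu}}$ after using $A_t^{(Q-1)\mu}=A_t^{Q\mu}-A_t^\mu$), and the Fubini/Markov step in (ii) goes through as you describe. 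The one small point worth making explicit is that the branching position is $X_{S-}$ in the model, but since $A^\mu$ is continuous and $S$ has a density with respect to ${\rm d}A_s^\mu$, one has $X_{S-}=X_S$ almost surely, so your use of $X_s$ is justified.
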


\subsection{Weak convergence and tail asymptotics}

Let 
$\overline{\mathbf M}=(\{{\mathbf X}_t\}_{t\geq 0}, \{{\mathbf P}_{{\mathbf x}}\}_{{\mathbf x}\in {\mathbf X}})$ 
be a branching symmetric $\alpha$-stable process on ${\mathbb R}^d$  
with branching rate $\mu\in {\cal K}$ and branching mechanism ${\mathbf p}$.
We impose the next assumption on $\mu$ and ${\mathbf p}$:
\begin{assum}\label{assum:weak}
\begin{enumerate}
\item[{\rm (i)}] The support of $\mu$ is compact in ${\mathbb R}^d$.
\item[{\rm (ii)}] $\nu_R\in {\cal K}$. 
\item[{\rm (iii)}] $\lambda((Q-1)\mu)<0$.
\end{enumerate}
\end{assum}

Let $\lambda=\lambda((Q-1)\mu)$. 
Under Assumption \ref{assum:weak}, 
$\lambda$ is the principal eigenvalue of the operator ${\cal H}^{(Q-1)\mu}$ 
on $L^2({\mathbb R}^d)$ as mentioned in Subsection \ref{subsect:spectral}. 
Let $h$ denote the bounded and strictly positive continuous version of 
the corresponding ground state with $L^2$-normalization. 
We define $M_t=e^{\lambda t}Z_t(h)$. 
Then by \cite[Lemma 3.4]{S08}, 
$\{M_t\}_{t\geq 0}$ is a nonnegative 
square integrable ${\mathbf P}_x$-martingale 
so that ${\mathbf E}_x[M_t]=h(x)$ and $M_{\infty}=\lim_{t\rightarrow\infty}M_t$ exists 
${\mathbf P}_x$-a.s. with ${\mathbf P}_x(M_{\infty}>0)>0$.

Let $L_t$ denote the maximal Euclidean norm of 
particles alive at time $t$:
$$L_t=\begin{cases}
\max_{1\leq k\leq Z_t}|{\mathbf X}_t^{(k)}| & (t<e_0),\\
0 & (t\geq e_0).
\end{cases}$$
Since each particle follows the law of the symmetric stable process and 
${\mathbf P}_x(Z_t<\infty)=1$ for any $t>0$, $L_t$ is well-defined 
and ${\mathbf P}_x(L_t<\infty)=1$ for any $t\geq 0$.

In what follows, let $c_*$ denote the positive constant 
given by \eqref{eq:c-ast} with $\nu=(Q-1)\mu$.   
For $\kappa>0$, let $R^{\kappa}(t)=(e^{-\lambda t}\kappa)^{1/\alpha}$. 
We then have 
\begin{thm}\label{thm:weak}
For any $\kappa>0$,
$$\lim_{t\rightarrow\infty}
{\mathbf P}_x(L_t>R^{\kappa}(t))=
{\mathbf E}_x\left[1-\exp\left(-\kappa^{-1}c_*M_{\infty}\right)\right].
$$
\end{thm}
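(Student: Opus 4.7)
The plan is to follow the scheme of \cite[Theorem 2.4]{NS21+}: apply the branching Markov property at an intermediate time $s$ (eventually sent to infinity), reduce matters to asymptotics of $v_{s,t}^\kappa(y) := \mathbf{P}_y(L_{t-s} > R^\kappa(t))$ built from Section \ref{sect:stable}, and invoke the martingale limit $M_s \to M_\infty$. Conditioning on $\mathcal{F}_s$ and using the branching property at time $s$ yields
$$\mathbf{P}_x(L_t > R^\kappa(t)) = \mathbf{E}_x\!\left[1 - \prod_{k=1}^{Z_s}\bigl(1 - v_{s,t}^\kappa(\mathbf{X}_s^{(k)})\bigr)\right],$$
so it suffices to identify the limit of the bracket as $t\to\infty$ with $s$ fixed, and then let $s\to\infty$.

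Next I will obtain a two-sided asymptotic for $v_{s,t}^\kappa(y)$. The upper bound comes from Markov's inequality and Lemma \ref{lem:moment}(i):
$$v_{s,t}^\kappa(y) \leq \mathbf{E}_y[Z_{t-s}(\{|z|>R^\kappa(t)\})] = E_y\!\left[e^{A_{t-s}^{(Q-1)\mu}};\,|X_{t-s}|>R^\kappa(t)\right],$$
which by Lemma \ref{lem:fk-asymp} and \eqref{eq:asymp-r1} is asymptotic to $(c_*/\kappa)\,h(y)\,e^{\lambda s}$ as $t\to\infty$, uniformly in $y$ on compact sets. For the matching lower bound I will apply the Paley--Zygmund inequality to $Z_{t-s}(\{|z|>R^\kappa(t)\})$: Lemma \ref{lem:moment}(ii) expresses the second moment as the first moment plus a genealogical correction controlled by \eqref{eq:gauge}, and calculations in the spirit of Lemma \ref{lem:second-1}(ii) show this correction is of order $(c_*/\kappa)^2 e^{2\lambda s}$, hence negligible compared with $(\mathbf{E}_y[Z_{t-s}(\cdot)])^2 \sim (c_* h(y) e^{\lambda s}/\kappa)^2$ once $s$ is taken large enough.

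With $v_{s,t}^\kappa(y) \sim (c_*/\kappa) h(y) e^{\lambda s}$ in hand, I will expand the product using $\max_k v_{s,t}^\kappa(\mathbf{X}_s^{(k)}) \to 0$ as $t\to\infty$ and the elementary sandwich
$$1 - \exp\bigl(-\sum_k v_k\bigr) \;\leq\; 1 - \prod_k(1-v_k) \;\leq\; 1 - \exp\bigl(-\sum_k v_k/(1-\max_k v_k)\bigr).$$
Since $Z_s<\infty$ almost surely, the sum $\sum_k v_{s,t}^\kappa(\mathbf{X}_s^{(k)})$ converges to $(c_*/\kappa) M_s$, giving $\mathbf{P}_x(L_t > R^\kappa(t) \mid \mathcal{F}_s) \to 1 - \exp(-c_* M_s/\kappa)$ almost surely. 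Taking expectations, sending $s\to\infty$, and invoking bounded convergence together with $M_s \to M_\infty$ a.s.\ delivers the theorem.

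The main obstacle is the Paley--Zygmund lower bound for \emph{every} $\kappa>0$. Lemma \ref{lem:second-1}(ii) provides decay of the second-moment correction only after first sending $\kappa\to\infty$, whereas here $\kappa$ is arbitrary; for fixed $\kappa$ the correction is a priori comparable in order to the squared first moment. This is circumvented by the extra factor $e^{2\lambda s}$ with $\lambda<0$: taking $s$ large \emph{first} renders the correction negligible relative to the squared first moment for any fixed $\kappa$, at the cost of controlling $v_{s,t}^\kappa$ at random particle positions, which is handled via the decay estimate \eqref{eq:ground-est} for $h$ and the compactness of $\supp\mu$.
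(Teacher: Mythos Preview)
Your outline follows the same scheme as the paper (which in turn follows \cite{NS21+}): condition at an intermediate time $s$, obtain asymptotics for $v_{s,t}^\kappa(y)=\mathbf P_y(L_{t-s}>R^\kappa(t))$ via first and second moments, sandwich the product by exponentials, and send $s\to\infty$. The upper bound and the product--to--exponential sandwich are fine.

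The gap is in the lower bound and in the treatment of the random particle positions. Paley--Zygmund together with Lemma~\ref{lem:second} only yields, for $y$ in a \emph{fixed} compact set,
\[
\liminf_{t\to\infty} v_{s,t}^\kappa(y)\ \ge\ \frac{c_*h(y)e^{\lambda s}/\kappa}{1+C\,e^{\lambda s}/(h(y)\kappa)},
\]
not the claimed exact asymptotic $v_{s,t}^\kappa(y)\sim (c_*/\kappa)h(y)e^{\lambda s}$ for fixed $s$. The multiplicative defect $C e^{\lambda s}/(h(y)\kappa)$ vanishes as $s\to\infty$ only when $h(y)$ stays bounded below; but the positions $\mathbf X_s^{(k)}$ are random and, by the very growth you are proving, the farthest particle sits near $e^{-\lambda s/\alpha}$, so \eqref{eq:ground-est} gives $h(\mathbf X_s^{(k)})\asymp e^{\lambda s(d+\alpha)/\alpha}$ and the defect blows up. Thus the sentence ``handled via the decay estimate \eqref{eq:ground-est} for $h$ and the compactness of $\supp\mu$'' does not close the argument; it is precisely the decay of $h$ that makes the Paley--Zygmund bound degenerate at far particles.

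The paper fills this gap with two ingredients absent from your sketch. First, Lemma~\ref{lem:uniform-2} upgrades the compact--set asymptotic of Lemma~\ref{lem:uniform-1} to one that is uniform over \emph{all} compact sets, by exploiting that before the hitting time $\sigma$ of a fixed $K\supset\supp\mu$ there is no branching: the strong Markov property and the martingale identity $E_x[e^{\lambda(T\wedge\sigma)}h(X_{T\wedge\sigma})]=h(x)$ transfer the estimate on $K$ to arbitrary starting points with error $e^{\lambda T}\|h\|_\infty$. Second, in the main proof the paper truncates to the event $\{L_{T_1}\le r_1\}$, of complementary probability $<\varepsilon$, so that all particle positions lie in a fixed ball and Lemma~\ref{lem:uniform-2} applies with $\kappa'=\kappa e^{-\lambda T_1}$ large. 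With these two devices in place the lower and upper bounds on the product match up to errors controlled by $\delta$ and $\varepsilon$, and one concludes as you indicate. Incorporating the hitting--time uniformization and the truncation would repair your argument; without them the Paley--Zygmund step does not furnish the required matching lower bound.
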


Theorem \ref{thm:weak} extends \cite[Theorem 2.4]{NS21+} 
for the branching Brownian motion to that for the branching symmetric stable process. 
Theorem \ref{thm:weak} implies that $L_t$  grows exponentially fast 
in contrast with the linear growth for the branching Brownian motion.

Since 
$${\mathbf P}_x(L_t>R^{\kappa}(t),e_0<\infty)
\leq {\mathbf P}_x(t<e_0<\infty)\rightarrow 0 \ (t\rightarrow\infty)$$
and $\{e_0<\infty\}\subset \{M_{\infty}=0\}$, 
we obtain 
$$\lim_{t\rightarrow\infty}
{\mathbf P}_x(L_t>R^{\kappa}(t) \mid e_0=\infty)=
{\mathbf E}_x\left[1-\exp\left(-\kappa^{-1}c_*M_{\infty}\right) \mid e_0=\infty\right].
$$
If we let $Y_t=e^{\lambda t/\alpha}L_t$, 
then the equality above reads 
\begin{equation}\label{eq:weak-0}
\lim_{t\rightarrow\infty}
{\mathbf P}_x(Y_t\leq \kappa\mid e_0=\infty)=
{\mathbf E}_x\left[\exp\left(-\kappa^{-\alpha}c_*M_{\infty}\right) \mid e_0=\infty\right].
\end{equation}
Moreover, if $d=1$ and $1<\alpha<2$, 
then \cite[Remark 3.14]{S08} yields 
$\{e_0=\infty\}=\{M_{\infty}>0\},$ ${\mathbf P}_x$-a.s.\ so that 
\begin{equation}\label{eq:weak}
\lim_{t\rightarrow\infty}
{\mathbf P}_x(Y_t\leq \kappa \mid M_{\infty}>0)
={\mathbf E}_x\left[\exp\left(-\kappa^{-\alpha}c_*M_{\infty}\right) \mid M_{\infty}>0\right].
\end{equation}
Hence the distribution of $Y_t$ under ${\mathbf P}_x(\cdot\mid M_{\infty}>0)$ 
is weakly convergent to the average over the Fr\'echet distributions with parameter $\alpha$ 
scaled by $c_*M_{\infty}$ 
(see, e.g., \cite[Theorem 1.12]{B17} and references therein 
for the terminologies about external distributions). 
On the other hand, if $d>\alpha$, 
then ${\mathbf M}$ is transient so that 
${\mathbf P}_x(\{e_0=\infty\}\cap \{M_{\infty}=0\})>0$. 
In particular, we do not know the validity of \eqref{eq:weak}. 

For $R>0$, let $Z_t^R=Z_t(\overline{B(R)}^c)$. 
The next theorem determines the long time asymptotic behavior of the tail distribution 
of $L_t$:  
\begin{thm}\label{thm:tail}
Let $a$ be a positive measurable function on $(0,\infty)$ 
such that $a(t)\rightarrow\infty$ as $t\rightarrow\infty$, 
and let $R(t)=(e^{-\lambda t}a(t))^{1/\alpha}$. 
\begin{enumerate}
\item[{\rm (i)}] 
The next equality holds locally uniformly in $x\in {\mathbb R}^d$.
$$\lim_{t\rightarrow\infty}\frac{{\mathbf P}_x(L_t>R(t))}{{\mathbf E}_x\left[Z_t^{R(t)}\right]}=1.$$
\item[{\rm (ii)}]
For each $k\in {\mathbb N}$, the next equality holds locally uniformly in $x\in {\mathbb R}^d$.
$$\lim_{t\rightarrow\infty}{\mathbf P}_x(Z_t^{R(t)}=k\mid L_t>R(t))=
\begin{cases} 1 & (k=1), \\ 0 & (k\geq 2).\end{cases}$$
\end{enumerate}
\end{thm}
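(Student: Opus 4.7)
The plan is to treat both parts simultaneously via the second moment method, combining the first and second moment formulas of Lemma~\ref{lem:moment} with the sharp asymptotics developed in Subsection~\ref{subsect:asymp-fk}. Since $\{L_t>R(t)\}=\{Z_t^{R(t)}\geq 1\}$, Markov's inequality gives $\mathbf{P}_x(L_t>R(t))\leq \mathbf{E}_x[Z_t^{R(t)}]$. In the other direction, the elementary identity
$$\mathbf{E}_x[Z_t^{R(t)}]-\mathbf{P}_x(L_t>R(t))=\sum_{k\geq 1}(k-1)\mathbf{P}_x(Z_t^{R(t)}=k)$$
yields the two-sided bound
$$\mathbf{P}_x(Z_t^{R(t)}\geq 2)\leq \mathbf{E}_x[Z_t^{R(t)}]-\mathbf{P}_x(L_t>R(t))\leq \mathbf{E}_x[Z_t^{R(t)}(Z_t^{R(t)}-1)].$$
Both parts of the theorem will thus follow once I show $\mathbf{E}_x[Z_t^{R(t)}(Z_t^{R(t)}-1)]/\mathbf{E}_x[Z_t^{R(t)}]\to 0$ locally uniformly in $x$.

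For the first moment, since $\lambda<0$ and $a(t)\to\infty$, the function $R(t)=(e^{-\lambda t}a(t))^{1/\alpha}$ grows exponentially in $t$, so $R(t)/t^{1/\alpha}\to\infty$ and Lemma~\ref{lem:fk-asymp} applies. Combining Lemma~\ref{lem:moment}(i), Lemma~\ref{lem:fk-asymp} and \eqref{eq:asymp-r}, one obtains
$$\mathbf{E}_x[Z_t^{R(t)}]=E_x\!\left[e^{A_t^{(Q-1)\mu}};|X_t|>R(t)\right]\sim h(x)\eta(t),\qquad \eta(t)\sim \frac{c_*}{a(t)}\to 0,$$
uniformly for $x$ in any compact $K\subset {\mathbb R}^d$. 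For the second moment, I apply Lemma~\ref{lem:moment}(ii) with $f={\bf 1}_{\overline{B(R(t))}^c}$, use $f^2=f$, and then invoke Lemma~\ref{lem:second} with $\nu=(Q-1)\mu$ and $\mu=\nu_R$ (both are compactly supported Kato class measures by Assumption~\ref{assum:weak}), which gives
$$\mathbf{E}_x[(Z_t^{R(t)})^2]=\mathbf{E}_x[Z_t^{R(t)}]+E_x\!\left[\int_0^t e^{A_s^{(Q-1)\mu}}E_{X_s}\!\left[e^{A_{t-s}^{(Q-1)\mu}};|X_{t-s}|>R(t)\right]^2{\rm d}A_s^{\nu_R}\right]\leq \mathbf{E}_x[Z_t^{R(t)}]+C\eta(t)^2,$$
so that $\mathbf{E}_x[Z_t^{R(t)}(Z_t^{R(t)}-1)]\leq C\eta(t)^2$.

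Combining the two estimates, for any compact $K$ and $x\in K$ the ratio is $O(\eta(t)/h(x))=o(1)$ locally uniformly, using $\inf_{x\in K}h(x)>0$. Part (i) follows from the sandwich $1-\mathbf{E}_x[Z_t^{R(t)}(Z_t^{R(t)}-1)]/\mathbf{E}_x[Z_t^{R(t)}]\leq \mathbf{P}_x(L_t>R(t))/\mathbf{E}_x[Z_t^{R(t)}]\leq 1$. For part (ii), the lower bound $\mathbf{P}_x(Z_t^{R(t)}\geq 2)\leq \mathbf{E}_x[Z_t^{R(t)}]-\mathbf{P}_x(L_t>R(t))$ combined with (i) yields $\mathbf{P}_x(Z_t^{R(t)}\geq 2)/\mathbf{P}_x(L_t>R(t))\to 0$; hence $\mathbf{P}_x(Z_t^{R(t)}=1\mid L_t>R(t))\to 1$, and for each $k\geq 2$ we have $\mathbf{P}_x(Z_t^{R(t)}=k\mid L_t>R(t))\leq \mathbf{P}_x(Z_t^{R(t)}\geq 2\mid L_t>R(t))\to 0$.

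The main obstacle is really the second-moment control, which was already packaged as Lemma~\ref{lem:second}: the decisive point is that the bound there is $C\eta(t)^2$ rather than merely $C\eta(t)$, providing the extra factor of $\eta(t)$ needed to beat the first moment $\mathbf{E}_x[Z_t^{R(t)}]\sim h(x)\eta(t)$. In the present regime this extra factor vanishes because $a(t)\to\infty$ forces $\eta(t)\to 0$; by contrast, in Theorem~\ref{thm:weak} the parameter $\kappa$ is fixed, $\eta(t)$ tends to a positive constant, and the crude second moment method used here is insufficient — one has to exploit the full Laplace functional of $M_\infty$ instead.
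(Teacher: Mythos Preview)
Your proof is correct and follows essentially the same second-moment route that the paper intends: the paper omits the proof, pointing to \cite[Theorem 2.5 and Corollary 2.6]{NS21+}, and the ingredients it has prepared for this purpose are precisely Lemma~\ref{lem:fk-asymp} (first moment) and Lemma~\ref{lem:second} (second moment), which you invoke in exactly the right way. Your reduction via $\mathbf{E}_x[Z_t^{R(t)}]-\mathbf{P}_x(L_t>R(t))=\sum_{k\geq 1}(k-1)\mathbf{P}_x(Z_t^{R(t)}=k)\leq \mathbf{E}_x[Z_t^{R(t)}(Z_t^{R(t)}-1)]$ and the closing observation about why $\eta(t)\to 0$ is the crucial feature distinguishing this regime from Theorem~\ref{thm:weak} are both clean and on point.
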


The statement of this theorem is similar to 
those of \cite[Theorem 2.5 and Corollary 2.6]{NS21+}; 
however, the tail distribution of the maximal displacement 
for the branching symmetric stable process is completely different from 
that for the branching Brownian motion (see \cite[(2.16), (2,17)]{NS21+}).   
In fact, combining Theorem \ref{thm:tail} 
with Lemmas \ref{lem:fk-asymp} and \ref{lem:moment}, and \eqref{eq:asymp-r}, we have as $t\rightarrow\infty$,
\begin{equation}\label{eq:tail}
{\mathbf P}_x(L_t>R(t))\sim {\mathbf E}_x\left[Z_t^{R(t)}\right]\sim \frac{c_*h(x)}{a(t)}.
\end{equation}

We omit the proof of Theorem \ref{thm:tail} 
because it is identical with those of \cite[Theorem 2.5 and Corollary 2.6]{NS21+}, respectively.

\subsection{Examples}\label{subsect:exam}
In this subsection, we present three examples 
to which the results in the previous subsection are applicable. 
\begin{exam}\rm 
Let $d=1$ and $\alpha\in(1,2)$. 
Then $\delta_0$, the Dirac measure at the origin, belongs to the Kato class. 
Let $\overline{{\mathbf M}}$ be a branching symmetric $\alpha$-stable process on ${\mathbb R}$
with branching rate $\mu=c \delta_0 \ (c>0)$ 
and branching mechanism ${\mathbf p}=\{p_n(x)\}_{n=0}^{\infty}$. 
We assume that $p_0(0)+p_2(0)=1$ for simplicity. 
Then ${\mathbf P}_x(e_0=\infty)>0$ if and only if $p_2(0)>1/2$ (\cite[Example 4.4]{S08}).
In particular, if $m=2p_2(0)>1$, then 
$$\lambda:=\lambda((Q-1)\mu)
=-\left\{\frac{c(m-1)2^{1/\alpha}}{\alpha\sin(\pi/\alpha)}\right\}^{\alpha/(\alpha-1)}$$
and 
$$
c_*=\frac{C_{1,\alpha}\omega_1}{\alpha(-\lambda)^2}\times c(m-1)\int_{{\mathbb R}}h(y)\,\delta_0({\rm d}y)
=\frac{2c(m-1)C_{1,\alpha}}{\alpha(-\lambda)^2}h(0).
$$
With these $\lambda$ and $c_*$, \eqref{eq:weak} and \eqref{eq:tail} hold.  
\end{exam}

\begin{exam}\rm 
Let $1<\alpha<2$ and $d>\alpha$. 
For $r>0$, let $\delta_r$ be the surface measure on $\partial B(r)=\{y\in {\mathbb R}^d \mid |y|=r\}$. 
Let $\overline{{\mathbf M}}$ be a 
branching symmetric $\alpha$-stable process on ${\mathbb R}^d$  
with branching rate $\mu=c\delta_r \ (c>0)$ and branching mechanism ${\mathbf p}=\{p_n(x)\}_{n=0}^{\infty}$. 
We assume that $p_0\equiv p_0(x)$, $p_2\equiv p_2(x)$ and $p_0+p_2=1$.  
Then ${\mathbf P}_x(e_0=\infty)>0$ holds irrelevantly of the value of $p_2$ 
because ${\mathbf M}$ is transient.
If we let  $m=2p_2$ and  $\lambda:=\lambda((Q-1)\mu)$, then 
$\lambda<0$ if and only if $p_2>1/2$ and 
$$
r>\left\{\frac{\sqrt{\pi}\Gamma((d+\alpha-2)/2)\Gamma(\alpha/2)}
{c(m-1)\Gamma((d-\alpha)/2)\Gamma((\alpha-1)/2)}\right\}^{1/(\alpha-1)}
$$
(see \cite[Example 4.7]{S08} and references therein). 
Under this condition, \eqref{eq:weak-0} and \eqref{eq:tail} hold.
\end{exam}

Assume that $1<\alpha<2$ and $d>\alpha$. 
Let $r>0$ and $\mu_r({\rm d}x)={\bf 1}_{B(r)}(x)\,{\rm d}x$. 
To present the last example, 
we estimate 
$$\check{\lambda}_{\beta}
=\inf\left\{{\cal E}(u,u) \mid u\in {\cal F}, \beta\int_{B(r)}u^2\,{\rm d}x=1\right\} \quad (\beta>0),$$
which is the bottom of the spectrum for the time changed Dirichlet form of $({\cal E},{\cal F})$ 
with respect to the measure $\beta\mu_r$   
(see, e.g., \cite[Section 3]{ST05} for details). 

Let $\check{\lambda}=\check{\lambda}_1$, and let
$v(x)=\int_{B(r)}G(x,y)\,{\rm d}y$ 
be the $0$-potential of the measure $\mu_r$. 
Then 
\begin{equation}\label{eq:upper}
\check{\lambda}\leq \frac{1}{\|v\|_{L^2(B(r))}^2}{\cal E}(v,v)
=\frac{1}{\|v\|_{L^2(B(r))}^2}\int_{B(r)}v\,{\rm d}x\leq \frac{1}{\inf_{y\in B(r)}v(y)}.
\end{equation}
Let 
$$I_{d,\alpha}=\alpha \int_0^1u^{d-1}(1+u)^{\alpha-d}\,{\rm d}u, \quad 
\kappa_{d,\alpha}=\frac{\alpha\Gamma(d/2)\Gamma(\alpha/2)}{2^{2-\alpha}\Gamma((d-\alpha)/2)}.$$
Recall that $\omega_d=2\pi^{d/2}\Gamma(d/2)^{-1}$ is the surface area of the unit ball in ${\mathbb R}^d$. 
Then for any $y\in B(r)$, $|y-z|\leq |y|+|z|\leq r+|z|$ and thus 
\begin{equation*}
\begin{split}
\int_{B(r)}\frac{{\rm d}z}{|y-z|^{d-\alpha}}
\geq \int_{B(r)}\frac{{\rm d}z}{(r+|z|)^{d-\alpha}}
=\omega_d\int_0^r \frac{s^{d-1}}{(r+s)^{d-\alpha}}\,{\rm d}s
=\frac{\omega_dI_{d,\alpha}}{\alpha}r^{\alpha}.
\end{split}
\end{equation*}
Since this inequality and \eqref{eq:green} yield 
$$\inf_{y\in B(r)}v(y)\geq \frac{I_{d,\alpha}r^{\alpha}}{\kappa_{d,\alpha}},$$
we get by \eqref{eq:upper},  
$$
\check{\lambda}\leq \frac{\kappa_{d,\alpha}}{I_{d,\alpha}r^{\alpha}}.
$$
We also know by \cite[Example 3.10]{ST05} that  
$$
\check{\lambda}\geq \frac{\kappa_{d,\alpha}}{r^{\alpha}}.
$$
Noting that $\check{\lambda}_{\beta}=\check{\lambda}/\beta$, we further obtain 
\begin{equation}\label{eq:eigen}
\frac{\kappa_{d,\alpha}}{\beta r^{\alpha}}\leq \check{\lambda}_{\beta}
\leq \frac{\kappa_{d,\alpha}}{\beta I_{d,\alpha}r^{\alpha}}.
\end{equation}
We here note that the lower bound of $\check{\lambda}$ 
in \cite[Example 3.10]{ST05} is incorrect because of the computation error. 

Let $\lambda_{\beta}=\lambda(\beta\mu_r)$. 
Then by \cite[Lemma 2.2]{TT07}, 
$\lambda_{\beta}<0$ if and only if $\check{\lambda}_{\beta}<1$. 
Hence by \eqref{eq:eigen}, we obtain
\begin{equation}\label{eq:eigen-1}
r>\left(\frac{\kappa_{d,\alpha}}{\beta I_{d,\alpha}}\right)^{1/\alpha} \Rightarrow \lambda_{\beta}<0, \quad 
r\leq \left(\frac{\kappa_{d,\alpha}}{\beta}\right)^{1/\alpha} \Rightarrow \lambda_{\beta}\geq 0.
\end{equation}
We do not know if  $\lambda_{\beta}$ is negative or not 
for $(\kappa_{d,\alpha}/\beta)^{1/\alpha}<r
\leq \left\{\kappa_{d,\alpha}/(\beta I_{d,\alpha})\right\}^{1/\alpha}$.

\begin{exam}\rm 
Let $1<\alpha<2$ and $d>\alpha$. 
For $r>0$, let $\mu_r({\rm d}x)={\bf 1}_{B(r)}(x)\,{\rm d}x$. 
Let $\overline{{\mathbf M}}$ be a branching symmetric $\alpha$-stable process on ${\mathbb R}^d$
with branching rate $\mu=c\mu_r \ (c>0)$ and branching mechanism ${\mathbf p}=\{p_n(x)\}_{n=0}^{\infty}$. 
We assume that $p_0\equiv p_0(x)$, $p_2\equiv p_2(x)$ and $p_0+p_2=1$.  
Then ${\mathbf P}_x(e_0=\infty)>0$ holds irrelevantly of the value of $p_2$.

Let  $m=2p_2$ and  $\lambda:=\lambda((Q-1)\mu)$.  
Assume that $p_2>1/2$. 
Then by  \eqref{eq:eigen-1}, we have the following: 
if  
$r>\left\{\kappa_{d,\alpha}/(c(m-1)I_{d,\alpha})\right\}^{1/\alpha}$, 
then $\lambda<0$ so that \eqref{eq:weak-0} and \eqref{eq:tail} hold. 
On the other hand, if $r\leq \{\kappa_{d,\alpha}/(c(m-1))\}^{1/\alpha}$, 
then we have $\lambda=0$ so that Assumption \ref{assum:weak} fails. 
\end{exam}

\section{Proof of Theorem \ref{thm:weak}}
\label{sect:proof}

Once we obtain the asymptotic behaviors of the Feynman-Kac functionals as 
in Subsection \ref{subsect:asymp-fk}, 
we can establish Theorem \ref{thm:weak} along the way as for the proof of \cite[Theorem 2.4]{NS21+}.

Let $\overline{{\mathbf M}}=(\{{\mathbf X}_t\}_{t\geq 0},\{{\mathbf P}_{\mathbf x}\}_{{\mathbf x}\in {\mathbf X}})$ 
be a branching symmetric $\alpha$-stable process on ${\mathbb R}^d$ 
with branching rate $\mu$ and branching mechanism ${\mathbf p}$ 
so that  Assumption \ref{assum:weak} is fulfilled. 
Let $\nu=(Q-1)\mu$ and $\lambda=\lambda((Q-1)\mu)$, 
and let $c_*$ be the corresponding value in \eqref{eq:c-ast}.  
Recall that for $\kappa>0$, $R^{\kappa}(t)=(e^{-\lambda t}\kappa)^{1/\alpha}$. 
\begin{lem}\label{lem:uniform-1}
Let $K$ be a compact set in ${\mathbb R}^d$. 
Then 
$$
\lim_{\kappa\rightarrow\infty}\limsup_{t\rightarrow\infty}
\sup_{x\in K}
\left|\frac{\kappa}{h(x)}{\mathbf P}_x(L_t>R^{\kappa}(t))-c_*\right|=0
$$
and for any $c>0$,
$$\lim_{\gamma \rightarrow +0}\limsup_{t\rightarrow\infty}
\sup_{x\in K}
\left|\frac{1}{\gamma h(x)}{\mathbf E}_x\left[1-e^{-\gamma c M_t}\right]-c\right|=0.$$
\end{lem}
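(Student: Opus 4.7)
\medskip

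\noindent\textbf{Proof plan.}
The plan is to treat the two assertions separately, but both via second moment arguments built on the results of Subsections \ref{subsect:spectral} and \ref{subsect:asymp-fk}.

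For the first assertion, I would start from the tautology
$\{L_t>R^{\kappa}(t)\}=\{Z_t^{R^{\kappa}(t)}\ge 1\}$
and control ${\mathbf P}_x(L_t>R^{\kappa}(t))$ from above by Markov's inequality and from below by the Paley--Zygmund inequality:
$$
\frac{({\mathbf E}_x[Z_t^{R^{\kappa}(t)}])^2}{{\mathbf E}_x[(Z_t^{R^{\kappa}(t)})^2]}\le {\mathbf P}_x(L_t>R^{\kappa}(t))\le {\mathbf E}_x[Z_t^{R^{\kappa}(t)}].
$$
Using Lemma \ref{lem:moment} with $f={\bf 1}_{\overline{B(R^{\kappa}(t))}^c}$ (noting $f^2=f$), the numerator equals $E_x[e^{A_t^{\nu}};|X_t|>R^{\kappa}(t)]^2$ and the denominator equals this first moment plus
$$
E_x\!\left[\int_0^t e^{A_s^{\nu}}E_{X_s}\!\left[e^{A_{t-s}^{\nu}};|X_{t-s}|>R^{\kappa}(t)\right]^2{\rm d}A_s^{\nu_R}\right].
$$
By Lemma \ref{lem:second-1}(i), $\kappa E_x[e^{A_t^{\nu}};|X_t|>R^{\kappa}(t)]\to c_*h(x)$ as $t\to\infty$ uniformly on $K$, while Lemma \ref{lem:second-1}(ii) says $\kappa$ times the correction term tends to $0$ on iterating $t\to\infty$ and $\kappa\to\infty$. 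Writing $m=m(t,\kappa)={\mathbf E}_x[Z_t^{R^{\kappa}(t)}]$ and $V=V(t,\kappa)$ for the correction term, one has
$\tfrac{\kappa m}{h(x)}\to c_*$ and $\tfrac{V}{m}=\tfrac{\kappa V}{\kappa m}\to 0$ in the iterated limit, so the lower Paley--Zygmund bound $\tfrac{\kappa m}{h(x)}\cdot\tfrac{1}{1+V/m}$ and the upper bound $\tfrac{\kappa m}{h(x)}$ both converge to $c_*$, uniformly on $K$ since $\inf_{x\in K}h(x)>0$ by continuity and positivity of $h$.

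For the second assertion, I would use the two-sided Taylor bound
$$
0\le \gamma c y-(1-e^{-\gamma c y})\le \tfrac{1}{2}(\gamma c y)^2,\qquad y\ge 0,
$$
together with the martingale identity ${\mathbf E}_x[M_t]=h(x)$. Taking expectations,
$$
0\le \gamma c h(x)-{\mathbf E}_x[1-e^{-\gamma c M_t}]\le \tfrac{1}{2}\gamma^2 c^2\,{\mathbf E}_x[M_t^2],
$$
so it suffices to show that ${\mathbf E}_x[M_t^2]$ is bounded uniformly in $t\ge 0$ and $x\in K$. From Lemma \ref{lem:moment}(ii) applied to $f=h$, and using $p_{t-s}^{\nu}h=e^{-\lambda(t-s)}h$,
$$
{\mathbf E}_x[M_t^2]=e^{2\lambda t}E_x[e^{A_t^{\nu}}h(X_t)^2]+E_x\!\left[\int_0^t e^{2\lambda s+A_s^{\nu}}h(X_s)^2\,{\rm d}A_s^{\nu_R}\right].
$$
The first term is bounded by $\|h\|_\infty^2$ times $e^{2\lambda t}E_x[e^{A_t^{\nu}}]$, which by the remark following Proposition \ref{prop:q} is $O(e^{\lambda t})\to 0$; the second is dominated by $\|h\|_\infty^2\sup_x E_x[\int_0^\infty e^{2\lambda s+A_s^{\nu}}{\rm d}A_s^{\nu_R}]<\infty$ by \eqref{eq:gauge}. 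Hence
$$
\sup_{x\in K}\left|\frac{1}{\gamma h(x)}{\mathbf E}_x[1-e^{-\gamma c M_t}]-c\right|\le \frac{\gamma c^2}{2\inf_{K}h}\sup_{x\in K,\,t\ge 0}{\mathbf E}_x[M_t^2],
$$
which is $O(\gamma)$ uniformly in $t$, proving the second claim.

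The main obstacle is the first assertion, where both factors in the Paley--Zygmund ratio shrink with $\kappa$ and one needs the precise rate comparison $V/m\to 0$; this is exactly what Lemma \ref{lem:second-1} is engineered to deliver, and verifying that its uniformity on $K$ propagates cleanly through the Paley--Zygmund computation is the only delicate point. The second assertion is essentially a Taylor expansion once the $L^2$ bound on $M_t$ is secured.
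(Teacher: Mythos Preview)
Your proof is correct and follows essentially the route the paper indicates: the paper omits the proof, saying only that it proceeds ``by using Lemma~\ref{lem:second-1} \dots\ in the same way as for \cite[Lemma~4.1]{NS21+}'', and your argument does exactly this---first and second moment bounds on $Z_t^{R^\kappa(t)}$ via Lemma~\ref{lem:moment}, Paley--Zygmund for the lower bound, and Lemma~\ref{lem:second-1} to control the ratio $V/m$, together with a Taylor bound and the $L^2$ martingale estimate \eqref{eq:gauge} for the second assertion. One minor remark: your invocation of the remark after Proposition~\ref{prop:q} to bound $e^{2\lambda t}E_x[e^{A_t^\nu}]$ is only needed for $t\ge 1$, which suffices since the statement concerns $\limsup_{t\to\infty}$.
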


We omit the proof of Lemma \ref{lem:uniform-1}; 
by using Lemma \ref{lem:second-1}, 
we can show Lemma \ref{lem:uniform-1} in the same way as for \cite[Lemma 4.1]{NS21+}.

Let ${\cal L}$ be the totality of compact sets in ${\mathbb R}^d$. 
\begin{lem}\label{lem:uniform-2}
The following equalities hold{\rm :}
\begin{equation}\label{eq:uni-2}
\lim_{\kappa\rightarrow\infty}\sup_{L\in {\cal L}}\limsup_{t\rightarrow\infty}
\sup_{x\in L}
\left|\frac{\kappa}{h(x)}{\mathbf P}_x(L_t>R^{\kappa}(t))-c_*\right|=0
\end{equation}
and for any $c>0$,
$$\lim_{\gamma \rightarrow +0}\sup_{L\in {\cal L}}\limsup_{t\rightarrow\infty}
\sup_{x\in L}
\left|\frac{1}{\gamma h(x)}{\mathbf E}_x\left[1-e^{-\gamma c M_t}\right]-c\right|=0.$$
\end{lem}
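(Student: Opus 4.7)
The plan is to transport the compact-set control of Lemma \ref{lem:uniform-1} out to all of ${\cal L}$ by a strong Markov decomposition at the first hitting time of a large ball containing $\supp\mu$. Since every $L\in{\cal L}$ is contained in some closed ball $\overline{B(n)}$, the problem reduces to bounding $\limsup_{t\to\infty}\sup_{|x|\leq n}|\cdots|$ uniformly in $n$. Fix $M>0$ with $\supp\mu\subset\overline{B(M)}$: the portion $|x|\leq M$ is exactly Lemma \ref{lem:uniform-1} applied on $\overline{B(M)}$, so the real task is to control $M<|x|\leq n$ with constants independent of $n$.

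For such $x$, set $\tau:=\inf\{t>0:X_t\in\overline{B(M)}\}$. Since no branching can occur before $\tau$, we have $A_{t\wedge\tau}^{(Q-1)\mu}=0$ and the process is a single non-branching particle on $[0,\tau)$. Split
\begin{equation*}
{\mathbf P}_x(L_t>R^{\kappa}(t))=P_x(|X_t|>R^{\kappa}(t),\,\tau>t)+{\mathbf E}_x\bigl[\mathbf{1}_{\tau\leq t}\,{\mathbf P}_{X_{\tau}}(L_{t-\tau}>R^{\kappa}(t))\bigr].
\end{equation*}
The first term is dominated by $P_0(|X_t|>R^{\kappa}(t)-|x|)\lesssim t/R^{\kappa}(t)^{\alpha}=te^{\lambda t}/\kappa$ for $t$ large (stable tail bound together with the triangle inequality), which vanishes in $\limsup_{t\to\infty}$ for each fixed $n$ because $te^{\lambda t}\to 0$. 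For the second term, the scaling relation $R^{\kappa}(t)=R^{\kappa e^{-\lambda\tau}}(t-\tau)$ allows invoking Lemma \ref{lem:uniform-1} on the compact set $\overline{B(M)}$ with parameter $\kappa'=\kappa e^{-\lambda\tau}\geq\kappa$, giving
\begin{equation*}
{\mathbf P}_{X_{\tau}}(L_{t-\tau}>R^{\kappa}(t))=\frac{c_* h(X_{\tau})}{\kappa}e^{\lambda\tau}(1+o(1))\qquad(t,\kappa\to\infty).
\end{equation*}

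The algebraic heart is the optional-stopping identity
\begin{equation*}
{\mathbf E}_x\bigl[\mathbf{1}_{\tau<\infty}\,e^{\lambda\tau}h(X_{\tau})\bigr]=h(x),\qquad |x|>M,
\end{equation*}
obtained by stopping the $P_x$-martingale $e^{\lambda t+A_t^{(Q-1)\mu}}h(X_t)$ (which is a martingale since $p_t^{(Q-1)\mu}h=e^{-\lambda t}h$) at $\tau\wedge T$ and letting $T\to\infty$: on $\{\tau\leq T\}$ we have $A_{\tau}^{(Q-1)\mu}=0$, and on $\{\tau>T\}$ the term $e^{\lambda T}h(X_T)\to 0$ by boundedness of $h$ and $\lambda<0$, so bounded convergence yields the identity. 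Substituting and dividing by $h(x)$ produces $\kappa{\mathbf P}_x(L_t>R^{\kappa}(t))/h(x)\to c_*$ uniformly in $|x|>M$, and hence uniformly in $n$.

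The second assertion follows by an entirely parallel split: on $\{\tau>t\}$ the single particle yields $M_t=e^{\lambda t}h(X_t)\to 0$, whence $1-e^{-\gamma cM_t}\to 0$; on $\{\tau\leq t\}$ the restart property gives $M_t=e^{\lambda\tau}\widetilde M_{t-\tau}$, where $\widetilde M$ is the analogous martingale under ${\mathbf P}_{X_{\tau}}$, and applying Lemma \ref{lem:uniform-1}'s second assertion on $\overline{B(M)}$ with rescaled parameter $\gamma e^{\lambda\tau}\leq\gamma$ produces $\gamma ch(x)(1+o(1))$ via the same identity. The principal obstacle is ensuring that Lemma \ref{lem:uniform-1} is invoked uniformly in the random parameter $\kappa'=\kappa e^{-\lambda\tau}\in[\kappa,\infty)$ (respectively $\gamma'=\gamma e^{\lambda\tau}\in(0,\gamma]$): inspection of Lemmas \ref{lem:fk-asymp} and \ref{lem:second-1} shows that the error constants depend only on the fixed compact set $\overline{B(M)}$ and on $t-\tau$, so the $o(1)$ error is bounded by a quantity of the form $\epsilon(\kappa,t-\tau)$ with $\epsilon\to 0$ as $\kappa\to\infty$ and $t-\tau\to\infty$, and by a universal constant otherwise; dominated convergence against the bounded, integrable weight $e^{\lambda\tau}h(X_{\tau})$ then justifies passage to the limit and completes the argument.
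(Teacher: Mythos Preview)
Your approach is essentially the paper's: both use the strong Markov property at the first hitting time $\sigma$ (your $\tau$) of a compact set containing $\supp\mu$, the fact that $A^{(Q-1)\mu}_{\cdot\wedge\sigma}\equiv 0$, and optional stopping for the $P_x$-martingale $e^{\lambda t+A_t^{(Q-1)\mu}}h(X_t)$ to recover $h(x)$. The only notable difference is that the paper treats the upper bound in \eqref{eq:uni-2} separately via Chebyshev and Lemma~\ref{lem:second-1}~(i), and for the lower bound truncates at $\{\sigma\leq T\}$ (sending $T\to\infty$ last), which keeps the rescaled parameter $\kappa e^{-\lambda\sigma}$ in the bounded interval $[\kappa,\kappa e^{-\lambda T}]$ and thereby bypasses the uniformity-in-$\kappa'$ inspection you sketch at the end.
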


\begin{proof}
We can prove the assertion in the same way as for \cite[Proposition 4.2]{NS21+}. 
We here prove \eqref{eq:uni-2} only. 
By the Chebyshev inequality and Lemma \ref{lem:moment},
\begin{equation*}
\begin{split}
{\mathbf P}_x(L_t>R^{\kappa}(t))
&={\mathbf P}_x\left(Z_t^{R^{\kappa}(t)}\geq 1\right)\\
&\leq {\mathbf E}_x\left[Z_t^{R^{\kappa}(t)}\right]
=E_x\left[e^{A_t^{(Q-1)\mu}};|X_t|>R^{\kappa}(t)\right].
\end{split}
\end{equation*}
Hence by Lemma \ref{lem:second-1}, 
$$
\limsup_{\kappa\rightarrow\infty}\sup_{L\in {\cal L}}\limsup_{t\rightarrow\infty}
\sup_{x\in L}
\frac{\kappa}{h(x)}{\mathbf P}_x(L_t>R^{\kappa}(t))\leq c_*.
$$
Then for the proof of  \eqref{eq:uni-2}, it suffices to show that 
\begin{equation}\label{eq:inf-lower}
\liminf_{\kappa\rightarrow\infty}\inf_{L\in {\cal L}}\liminf_{t\rightarrow\infty}\inf_{x\in L}
\frac{\kappa}{h(x)}{\mathbf P}_x(L_t>R^{\kappa}(t))\geq c_*.
\end{equation}

In what follows, we give a proof of \eqref{eq:inf-lower}. 
Lemma \ref{lem:uniform-1} says that 
for any $\varepsilon>0$ and $K\in {\cal L}$, 
there exists $\kappa_0=\kappa_0(\varepsilon,K)>0$ such that 
for any $\kappa\geq \kappa_0$, 
there exists $T_0=T_0(\varepsilon, K,\kappa)>0$ such that 
for any $t\geq T_0$ and $x\in K$,
$$|\kappa{\mathbf P}_x(L_t>R^{\kappa}(t))-c_*h(x)|<\varepsilon h(x).$$

Let $t>0$ and $0\leq s<t$. Then 
$$
R^{\kappa}(t)
=(e^{-\lambda(t-s)})^{1/\alpha}(\kappa e^{-\lambda s})^{1/\alpha}
=R^{\kappa e^{-\lambda s}}(t-s)
$$
and $\kappa e^{-\lambda s}\geq \kappa\geq \kappa_0$. 
Hence for any $T>0$, $t\geq T+T_0$ and $s\in [0,T]$, 
\begin{equation}\label{eq:bound-l}
|\kappa{\mathbf P}_x(L_{t-s}>R^{\kappa}(t))-c_*h(x)|<\varepsilon h(x), \ x\in K.
\end{equation}

Fix $K\in {\cal L}$ which includes the support of $\mu$. 
Let $\sigma$ be the first hitting time to $K$ 
of some particle. 
Then $\sigma$ is relevant to the initial particle only 
because particles can not branch outside $K$. 
We use the same notation $\sigma$ to denote 
the first hitting time to $K$ 
of a symmetric $\alpha$-stable process 
${\mathbf M}=(\{X_t\}_{t\geq 0},\{P_x\}_{x\in {\mathbb R}^d})$.   
Since $X_{\sigma}\in K$, 
there exists $\kappa_1=\kappa_1(\varepsilon)>0$ for any $\varepsilon>0$  
such that for any $\kappa\geq\kappa_1$, 
there exists $T_1=T_1(\varepsilon,\kappa)>0$ such that 
for any $T>0$ and $t\geq T+T_1$, 
we have by the strong Markov property and \eqref{eq:bound-l},
\begin{equation}\label{eq:est-h}
\begin{split}
\kappa{\mathbf P}_x(L_t>R^{\kappa}(t))
&\geq E_x\left[\kappa{\mathbf P}_{X_{\sigma}}(L_{t-s}>R^{\kappa}(t))|_{s=\sigma};\sigma \leq T\right]\\
&\geq (c_*-\varepsilon)E_x\left[h(X_{\sigma});\sigma\leq T\right], \ x\in {\mathbb R}^d.
\end{split}
\end{equation}

Since $e^{\lambda t+A_t^{(Q-1)\mu}}h(X_t)$ is a $P_x$-martingale 
and $P_x(A_{t\wedge \sigma}^{(Q-1)\mu}=0)=1$ for any $t\geq 0$,  
the optional stopping theorem yields 
$$E_x\left[e^{\lambda(T\wedge \sigma)}h(X_{T\wedge \sigma})\right]
=E_x\left[e^{\lambda(T\wedge \sigma)+A_{T\wedge \sigma}^{(Q-1)\mu}}h(X_{T\wedge \sigma})\right]
=h(x)$$
and thus 
\begin{equation*}
\begin{split}
E_x\left[h(X_{\sigma});\sigma\leq T\right]
&\geq E_x\left[e^{\lambda\sigma}h(X_{\sigma});\sigma\leq T\right]\\
&=E_x\left[e^{\lambda(T\wedge \sigma)}h(X_{T\wedge \sigma})\right]
-E_x\left[e^{\lambda T}h(X_T);T<\sigma\right]\\
&\geq h(x)-e^{\lambda T}\|h\|_{\infty}.
\end{split}
\end{equation*}
Then by \eqref{eq:est-h}, we have for any $t\geq T+T_1$,
$$\kappa {\mathbf P}_x(L_t>R^{\kappa}(t))
\geq (c_*-\varepsilon)(h(x)-e^{\lambda T}\|h\|_{\infty}), \ x\in {\mathbb R}^d.$$
In particular, for any $L\in {\cal L}$ and $t\geq T+T_1$,
$$\inf_{x\in L}\frac{\kappa}{h(x)}{\mathbf P}_x(L_t>R^{\kappa}(t))
\geq (c_*-\varepsilon)\left(1-\frac{e^{\lambda T}\|h\|_{\infty}}{\inf_{x\in L}h(x)}\right).$$
Letting $t\rightarrow\infty$ and then $T\rightarrow\infty$, we have 
$$
\liminf_{t\rightarrow\infty}\inf_{x\in L}\frac{\kappa}{h(x)}{\mathbf P}_x(L_t>R^{\kappa}(t))
\geq c_*-\varepsilon.
$$
Furthermore, since the right hand side above is independent of the choice of $L\in {\cal L}$, 
we obtain \eqref{eq:inf-lower} by letting $\kappa\rightarrow\infty$ and then $\varepsilon\rightarrow 0$. 
\qed
\end{proof}

\noindent
{\it Proof of Theorem {\rm \ref{thm:weak}}.}
We follow the argument of \cite[Proof of Theorem 2.4]{NS21+}. 
In what follows, we write $R(t)=R^{\kappa}(t)$ for simplicity. 
Since ${\mathbf P}_x(L_t<\infty)=1$ for any $t\geq 0$,  
there exists $r_1=r_1(\varepsilon,T_1)$ for any $\varepsilon>0$ and $T_1>0$ 
such that ${\mathbf P}_x(L_{T_1}>r_1)\leq \varepsilon$. 
Hence for any $t\geq T_1$, 
$${\mathbf P}_x(L_t\leq R(t))\leq {\mathbf P}_x(L_t\leq R(t), L_{T_1}\leq r_1)+\varepsilon,
$$ 
which yields 
\begin{equation}\label{eq:lower}
\begin{split}
&{\mathbf P}_x(L_t>R(t))-{\mathbf E}_x\left[1-\exp\left(-\kappa^{-1}c_*M_t\right)\right]\\
&\geq {\mathbf E}_x\left[\exp\left(-\kappa^{-1}c_*M_t\right)-{\bf 1}_{\{L_t\leq R(t)\}}; L_{T_1}\leq r_1\right]
-\varepsilon.
\end{split}
\end{equation}

Recall that $e_0=\inf\{t>0 \mid Z_t=0\}$ is the extinction time of $\overline{\mathbf M}$. 
Then for any $t\geq e_0$, $M_t=0$ and $L_t=0$ by definition. 
Therefore, by the Markov property,  
\begin{equation*}
\begin{split}
&{\mathbf E}_x\left[\exp\left(-\kappa^{-1}c_*M_t\right)-{\bf 1}_{\{L_t\leq R(t)\}}; L_{T_1}\leq r_1\right]\\
&={\mathbf E}_x\Bigl[
\left\{{\mathbf E}_{{\mathbf X}_{T_1}}\left[
\exp\left(-\kappa^{-1}c_*e^{\lambda T_1}M_{t-T_1}\right)\right]
-{\mathbf P}_{{\mathbf X}_{T_1}}(L_{t-T_1}\leq R(t))\right\}; \\ 
&\qquad \qquad T_1<e_0, L_{T_1}\leq r_1\Bigr]\\
&={\rm (VI)}.
\end{split}
\end{equation*}

By Lemma \ref{lem:uniform-2}, 
there exists $\kappa_0=\kappa_0(\delta)>0$ for any $\delta\in(0,c_*)$ 
such that if $T>0$ satisfies $\kappa e^{-\lambda T}\geq \kappa_0$, then 
$$\sup_{L\in {\cal L}}\limsup_{t\rightarrow\infty}\sup_{x\in L}
\left|\frac{\kappa e^{-\lambda T}}{h(x)}{\mathbf P}_x(L_{t-T}>R(t))-c_*\right|<\delta.$$
Let $T_1=T_1(\kappa_0)$ satisfy $\kappa e^{-\lambda T_1}\geq \kappa_0$. 
Then there exists $T_2=T_2(\varepsilon,\delta,T_1)>0$ such that 
for any $y\in \overline{B(r_1)}$ and $t\geq T_1+T_2$, 
\begin{equation}\label{eq:tail-1}
\kappa^{-1}(c_*-\delta)e^{\lambda T_1}h(y)
\leq {\mathbf P}_y(L_{t-T_1}>R(t))
\leq \kappa^{-1}(c_*+\delta)e^{\lambda T_1}h(y).
\end{equation}
Note that 
$1-x\leq e^{-x}$ for any $x\in {\mathbb R}$ 
and there exists $r_0(\delta)>0$ for any $\delta>0$ 
such that $1-x\geq e^{-(1+\delta)x}$ for any $x\in [0,r_0(\delta)]$.
Hence if we take $T_1$ so large that 
$\kappa^{-1}(c_*+\delta)e^{\lambda T_1}\|h\|_{\infty}\leq r_0(\delta)$, 
then by \eqref{eq:tail-1},
\begin{equation*}
\begin{split}
\exp\left(-(1+\delta)\kappa^{-1}(c_*+\delta)e^{\lambda T_1}h(y)\right)
&\leq {\mathbf P}_y(L_{t-T_1}\leq R(t))\\
&\leq \exp\left(-\kappa^{-1}(c_*-\delta)e^{\lambda T_1}h(y)\right).
\end{split}
\end{equation*}
By Lemma \ref{lem:uniform-2}, we also obtain  
\begin{equation*}
\begin{split}
\exp\left(-(1+\delta)\kappa^{-1}(c_*+\delta)e^{\lambda T_1}h(y)\right)
&\leq 
{\mathbf E}_y\left[\exp\left(-\kappa^{-1}c_*e^{\lambda T_1}M_{t-T_1}\right)\right]\\
&\leq \exp\left(-\kappa^{-1}(c_*-\delta)e^{\lambda T_1}h(y)\right)
\end{split}
\end{equation*}
so that for any $t\geq T_1+T_2$,
$$
{\rm (VI)}
\geq {\mathbf E}_x\left[\exp\left(-(1+\delta)\kappa^{-1}(c_*+\delta)M_{T_1}\right)\right]
-{\mathbf E}_x\left[\exp\left(-\kappa^{-1}(c_*-\delta)M_{T_1}\right)\right].
$$
Since the right hand side goes to $0$ as 
$t\rightarrow\infty$, $T_1\rightarrow\infty$ and  $\delta\rightarrow+0$,
we have by \eqref{eq:lower},
$$
\liminf_{t\rightarrow\infty}
\left({\mathbf P}_x(L_t>R(t))-{\mathbf E}_x\left[1-\exp\left(-\kappa^{-1}c_*M_t\right)\right]\right)
\geq 0.
$$
In the same way, we also have 
$$
\limsup_{t\rightarrow\infty}
\left({\mathbf P}_x(L_t>R(t))-{\mathbf E}_x\left[1-\exp\left(-\kappa^{-1}c_*M_t\right)\right]\right)
\leq 0
$$
so that the proof is complete. 
\qed

\ack
The author would like to thank Professor Yasuhito Nishimori and 
Professor Masayoshi Takeda 
for valuable comments on the draft of this paper. 
He is also grateful to the referee 
for his/her careful reading of the manuscript and valuable comments.

\end{document}